\newtheorem{thm}{Theorem}[section]
\newtheorem{prop}[thm]{Proposition}
\newtheorem{conj}[thm]{Conjecture}
\newtheorem{lem}[thm]{Lemma}
\newtheorem{defi}[thm]{Definition}
\newtheorem{claim}[thm]{Claim}
\theoremstyle{remark}
\newtheorem*{remark}{Remark}
\begin{document}

\title{Triangle-different Hamiltonian paths}

\makeatother
\author{István Kovács}
\address{Department of Control Engineering and Information Technology \\ Budapest University of Technology and Economics}
\email[István Kovács]{kovika91@gmail.com}
\thanks{\noindent The research of the first author was
supported by National Research, Development and Innovation Office NKFIH, K-111827.}

\author{Daniel Soltész}
\address{Department of Computer Science and Information Theory \\ Budapest University of Technology and Economics}
\email[Daniel Soltész]{solteszd@math.bme.hu}
\thanks{\noindent The research of the second author was
supported by the Hungarian Foundation for Scientific Research Grant (OTKA) No. 108947}

\begin{abstract}

Let $G$ be a fixed graph. Two paths of length $n-1$ on $n$ vertices (Hamiltonian paths) are $G$-different if there is a subgraph isomorphic to $G$ in their union. In this paper we prove that the maximal number of pairwise triangle-different Hamiltonian paths is equal to the number of balanced bipartitions of the ground set, answering a question of Körner, Messuti and Simonyi.

%We also determine the maximal number of Hamiltonian cycle-different Hamiltonian paths when the size of the ground set is a prime.

\end{abstract}

%\keywords{{\em Keywords}: Hamiltonian path, triangle, union.}

\maketitle

\section{Introduction}

%Hamiltonian paths of the complete graph on $n$ vertices are in obvious relationship with permutations of the set $[n]:=\{1,\dots,n\}$.
%Investigations of the size of largest sets of permutations pairwise
%satisfying a prescribed relation was initiated in \cite{colliding} in
%relation with the capacity of infinite graphs, a notion analogous to Shannon's graph capacity concept.
%Investigations of the size of largest sets of permutations pairwise satisfying a prescribed relation was initiated in \cite{colliding} in relation with the capacity of infinite graphs, a notion analoguous to Shannon's graph capacity concept.
Problems concerning the size of largest sets of permutations pairwise
satisfying a prescribed relation has a large literature, see e.g.
\cites{DF, EFP}. Investigations of a special type of such problems
related to the Shannon capacity of infinite graphs, a notion analogous
to Shannon's graph capacity concept, was initiated in
\cite{colliding}.
Two permutations $\pi_1,\pi_2$ of $[n]:=\{1,\dots,n\}$ are called colliding in \cite{colliding} if there is an index $i\in [n]$ such that $|\pi_1(i)-\pi_2(i)|= 1$.

\begin{conj}[Körner, Malvenuto]\cite{colliding}\label{colliding}
The maximal number of pairwise colliding permutations of $[n]$ is $\binom{n}{\left \lfloor \frac{n}{2} \right \rfloor}$.
\end{conj}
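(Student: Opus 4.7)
The statement has the canonical shape of an extremal combinatorics problem, so the plan is to split the argument into a matching lower bound construction and an upper bound. The target $\binom{n}{\lfloor n/2\rfloor}$ is the number of balanced subsets of $[n]$, which strongly suggests that the extremal family should be indexed by these subsets: to every balanced $S\subseteq[n]$ one associates a permutation $\pi_S$ in such a way that $\pi_S$ and $\pi_T$ collide whenever $S\neq T$.

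For the lower bound a natural attempt is to pair the elements of $[n]$ as $\{1,2\},\{3,4\},\ldots$ and let $\pi_S$ record $S$ via the relative order of the two elements within each consecutive pair. The collision condition $|\pi_S(i)-\pi_T(i)|=1$ is then meant to be triggered on the first pair at which $S$ and $T$ differ: at the relevant position the two permutations place the same pair $\{2k-1,2k\}$ but in reversed order, giving an exact difference of one. The non-trivial point is to orchestrate the ``between-pair'' positions so that an earlier agreement is not destroyed later and so that every pair $S\neq T$ does produce a genuine collision; this is pure combinatorial bookkeeping, but it is where a proposed construction can silently fail on small cases and needs to be verified carefully.

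For the upper bound I would argue information-theoretically. Let $\mathcal F$ be a pairwise colliding family and let $\pi$ be uniform on $\mathcal F$, so that $\log|\mathcal F|=H(\pi)$. Apply a Shearer-type inequality to a cover of $[n]$ by pairs of consecutive positions, or, alternatively, reformulate $|\mathcal F|$ as the clique number of the ``collision graph'' on $S_n$ and relate it to the Shannon capacity of an auxiliary graph whose independence number is exactly $\binom{n}{\lfloor n/2\rfloor}$. The hope is that the collision constraint forces, on each pair of consecutive positions, the joint distribution of $(\pi(i),\pi(i+1))$ to concentrate on pairs $\{k,k+1\}$, reducing the effective randomness of $\pi$ to a choice of balanced subset.

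The decisive obstacle is the upper bound. The gap between $n!$ and $\binom{n}{\lfloor n/2\rfloor}$ is so large that a naive averaging argument leaves a multiplicative factor of order $n!/2^n$, and the tightness of the conjecture at equality suggests that the extremal families are essentially unique up to symmetry. Matching the constant will require an argument that precisely captures the ``nearest-neighbour'' nature of the collision relation rather than a coarser ``distinct values'' surrogate; this is where I expect the real difficulty to lie and, plausibly, the reason the statement is still posed as a conjecture rather than a theorem. The paper most likely sidesteps this by attacking the \emph{triangle-different} Hamiltonian path analogue, where the graph-theoretic reformulation is more flexible and the matching upper bound becomes tractable.
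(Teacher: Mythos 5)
The statement you are attacking is Conjecture 1.1 of the paper and remains open; the paper offers no proof of it, only the observation that the claimed value is an upper bound. So no proposal can be ``correct'' here, but beyond that your diagnosis of where the difficulty sits is inverted. The upper bound, which you treat as the decisive obstacle requiring Shearer-type entropy inequalities or Shannon capacity machinery, is in fact a one-line parity argument that the paper spells out immediately after the conjecture: if $|\pi_1(i)-\pi_2(i)|=1$ then $\pi_1(i)$ and $\pi_2(i)$ have opposite parities, so two permutations whose values have the same parity at every position cannot collide; hence a pairwise colliding family contains at most one permutation per parity pattern, and the number of such patterns (placements of the $\lceil n/2\rceil$ odd values among $n$ positions) is exactly $\binom{n}{\lfloor n/2\rfloor}$. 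There is no $n!/2^n$ slack to fight.

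The genuinely open direction is the lower bound, and your proposed construction cannot work even in principle: encoding a set $S$ by the relative order of the two elements inside each of the $\lfloor n/2\rfloor$ consecutive pairs yields at most $2^{\lfloor n/2\rfloor}\approx 1.41^n$ distinct permutations, which is exponentially smaller than the target $\binom{n}{\lfloor n/2\rfloor}=2^{n-o(n)}$; the bookkeeping you defer is not the issue, the count is. The best construction known (cited in the paper) reaches only about $1.81^n$. The paper's actual contribution is orthogonal to this conjecture: it proves the analogous extremal statement for triangle-different Hamiltonian paths, where the balanced-bipartition upper bound can be matched exactly by an explicit recursive construction (the Z-swapping construction applied to properly laddered weighted graphs).
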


In Conjecture~\ref{colliding} $\binom{n}{\left \lfloor \frac{n}{2} \right \rfloor}$ is best possible, since permutations containing numbers of the same parity at every position do not collide, so the maximal number of pairwise colliding permutations is at most the number of "parity patterns", that is, the number of ways $\lceil\frac{n}{2}\rceil$ odd and $\lfloor{\frac{n}{2}}\rfloor$ even numbers can be placed on $n$ positions if only the parity of the numbers matter, their actual value do not. The largest construction known contains roughly $1.81^n$ permutations (see \cite{infinite}). Conjecture~\ref{colliding} triggered the investigation of several problems of the same flavor that concern the maximal number of permutations any pair of which satisfy some specified constraint, see \cites{KMS, KSS, infinite}.  There is a natural relationship between Hamiltonian paths and permutations. In this paper we focus on problems of the above type that can naturally be formulated in terms of Hamiltonian paths.

The union of two graphs $H_1$ and $H_2$ on the same vertex set is the graph on this common vertex set having $E(H_1) \cup E(H_2)$ as edge set. Let $G$ be some fixed graph. We say that two Hamiltonian paths are $G$-different if their union contains $G$ as a subgraph.
%A family of Hamiltonian paths is $G$-different if every pair of paths from this family is $G$-different.
The maximal number of pairwise $G$-different Hamiltonian paths has been studied for various $G$ in \cites{komesi, locsep, k=4}. The problem is uninteresting when $G$ is contained in a Hamiltonian path.
%When $G$ is a path the problem is trivial as every union of two different Hamiltonian paths contains $G$ as a subgraph.
(The maximal size of $G$-different families in these cases is simply $\frac{n!}{2}$.) A somewhat more interesting case is that of $K_{1,3}$-different Hamiltonian paths. It is easy to see that the union of two Hamiltonian paths does not contain a vertex of degree $3$ if and only if the union itself is a Hamiltonian cycle. Thus the maximal size of $K_{1,3}$-different Hamiltonian paths is $\frac{(n-1)!}{2}$. The first few choices for $G$, where the problem becomes more difficult, are: $K_3$, $K_4$, $C_4$, $K_{1,4}$. Until now even the correct order of magnitude was unknown for these families, for the best lower and upper bounds known so far see Table~\ref{results}. In this paper we determine the maximal number of pairwise triangle-different Hamiltonian paths on $n$ vertices exactly.

%
%\begin{table}[htbp]
%\begin{center}
%\renewcommand*{\arraystretch}{1.5}
%  \begin{tabular}{| c | c |c|c|c|}
%    \hline
%   \text{rough lower bound}  &lower bound & G & upper bound &\text{rough upper bound}  \\ \hline
%    $1.18^n$  &&$K_4$ &&  $ 1.5^n$  \\ \hline
%     $1.58^n$  && $K_3$ &&  $2^n$  \\ \hline
%     $n^{\frac{1}{2}n}$ &&$  C_4 $&&$ n^{\frac{3}{4}n} $ \\ \hline
%   $ n^{\frac{n}{2}}2^{-0.75n} $ &$ \frac{\left( \left\lfloor \frac{n}{2} \right\rfloor -1 \right)!}{2^{\left \lfloor \frac{n}{4} \right \rfloor}}$ &$ K_{1,4} $&$ \frac{n!}{\left\lfloor \frac{n}{2} \right\rfloor!  \, 2^{\left\lfloor \frac{n}{2} \right\rfloor}}$ &$ n^{\frac{n}{2}}2^{-0.72n}$ \\ \hline
%  \end{tabular}
%\end{center}
%\caption{The rough order of magnitude of lower and upper bounds for the maximal size of $G$ different Hamiltonian paths. For the precise bounds see \cite{komesi,k=4,locsep}.}
%\label{fi}
%\end{table}

%The union of two graphs on the same vertex set is the graph having the edge set that is the union of the edge sets of the two graphs.

%In a series of papers \cites{connector,locsep,degree4,k=4,komesi,indep}, questions of the following type are studied. What is the maximal number of graphs from a given graph class, such that the union of every two graphs satisfies a certain condition.
 Körner, Messuti and Simonyi made the following observation.

\begin{prop} \cite{komesi} \label{prop}
The maximal number of Hamiltonian paths such that every pairwise union contains an odd cycle is equal to the number of balanced bipartitions of the vertex set.  That is, on $2n+1$ vertices this number is $\binom{2n+1}{n}$ and on $2n+2$ vertices it is $\frac{1}{2}\binom{2n+2}{n+1}=\binom{2n+1}{n}$.
\end{prop}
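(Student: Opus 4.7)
The plan is to match an easy upper bound with a matching construction, both driven by the observation that a Hamiltonian path, being a connected bipartite graph, determines a unique bipartition of the vertex set (up to swapping the two classes) whose two classes have sizes differing by at most one; these are exactly the balanced bipartitions of the statement.

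For the upper bound, I would argue that if two Hamiltonian paths $P_1$ and $P_2$ induce the same balanced bipartition $(A,B)$, then every edge of $P_1\cup P_2$ runs between $A$ and $B$, so the union is bipartite and contains no odd cycle. Hence in any family of Hamiltonian paths whose pairwise unions all contain an odd cycle, no two paths share the same underlying balanced bipartition, and the family size is at most the number of such bipartitions.

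For the matching construction, for each balanced bipartition $(A,B)$ I would pick any single Hamiltonian path that alternates between $A$ and $B$; such a path exists because the complete bipartite graph $K_{|A|,|B|}$ contains a Hamiltonian path whenever $\bigl||A|-|B|\bigr|\le 1$. If $P_1,P_2$ come from distinct bipartitions $\{A_1,B_1\}\neq\{A_2,B_2\}$, then $P_1\cup P_2$ must be non-bipartite: otherwise a bipartition of the union would, upon restriction to each $P_i$, have to coincide with the unique bipartition of the connected spanning subgraph $P_i$, forcing $\{A_1,B_1\}=\{A_2,B_2\}$, a contradiction. A non-bipartite graph always contains an odd cycle, so the constructed family meets the requirement.

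The claimed numerical values are a direct count: balanced bipartitions of a $(2n{+}1)$-set are in bijection with its $n$-subsets, giving $\binom{2n+1}{n}$; on $2n+2$ vertices the number of unordered $(n{+}1,n{+}1)$-bipartitions is $\tfrac12\binom{2n+2}{n+1}=\binom{2n+1}{n}$. I do not expect any serious obstacle in this argument: the entire proof hinges on the uniqueness of the bipartition of a connected bipartite graph, supplemented by the trivial remark that $K_{|A|,|B|}$ is Hamiltonian-traceable when the parts are nearly balanced, so the only thing to be careful about is verifying that both implications of the bipartition correspondence really go through.
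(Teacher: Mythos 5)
Your proposal is correct and follows essentially the same argument the paper gives for Proposition~\ref{prop}: the upper bound from the fact that two paths with the same balanced bipartition have a bipartite (hence odd-cycle-free) union, and the lower bound by choosing one Hamiltonian path per balanced bipartition, with the uniqueness of the bipartition of a connected spanning bipartite subgraph guaranteeing that distinct choices yield non-bipartite unions. Your write-up merely spells out details the paper leaves implicit.
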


The upper bound follows by observing that a Hamiltonian path is a bipartite graph with a balanced bipartition and the union of two paths with the same bipartition is a bipartite graph which clearly cannot contain any odd cycle. On the other hand, if for every balanced bipartition we choose an arbitrary Hamiltonian path that corresponds to it, then we obtain a good family.

The authors in \cite{komesi} asked whether the same upper bound can be attained with pairwise triangle-different Hamiltonian paths.
They verified that this is possible up to $5$ vertices. By an easy product construction this yields a lower bound of roughly $1.58^n$.

An affirmative answer to the above question may be interpreted by saying that insisting on a triangle in the pairwise unions is not more restrictive than requiring just any odd cycle. As noted in \cite{komesi}, there are some famous theorems of this kind that show the indifference of specifying the triangle among odd cycles in certain situations.
%Their question was further motivated by the existence of several famous results the content of which can be formulated by saying that "the answer is the same whether we ask for an odd cycle or a triangle".
For example, the maximum number of edges in an odd-cycle-free (bipartite) graph is $\left \lfloor \frac{n^2}{4} \right \rfloor$ and the Mantel-Turán theorem states that this number is the same for triangle-free graphs. Another slightly related example is the following celebrated theorem, where the authors are interested in the intersection of general graphs instead of unions of Hamiltonian paths.

 \begin{thm}[D. Ellis, Y. Filmus, E. Friedgut \cite{trintersect}] \label{intersect}

The following three numbers are equal.
\begin{itemize}
\item The maximum number of $n$-vertex graphs such that every pairwise intersection contains an odd cycle.
\item The maximum number of $n$-vertex graphs such that every pairwise intersection contains a triangle.
\item The number of $n$-vertex graphs that contain a fixed triangle.
\end{itemize}
\end{thm}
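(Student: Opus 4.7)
Let $M_t$, $M_o$, and $N$ denote the three quantities in the theorem. The easy direction yields the chain $N \leq M_t \leq M_o$: the $2^{\binom{n}{2}-3}$ graphs that contain a prescribed triangle form a triangle-intersecting family (so $N \leq M_t$), and every triangle is itself an odd cycle (so $M_t \leq M_o$). Equality of all three therefore reduces to proving the single inequality $M_o \leq N = 2^{\binom{n}{2}-3}$.

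To attack this upper bound I would identify each $n$-vertex graph with its edge set in $\{0,1\}^{\binom{n}{2}}$ and introduce the auxiliary graph $\mathcal{H}$ whose vertices are the non-bipartite graphs on $[n]$ and whose edges are the pairs $\{A,B\}$ with $A \cap B$ bipartite. An odd-cycle-intersecting family of size at least two contains no bipartite graph (any bipartite member would force a bipartite intersection with every other member) and therefore forms an independent set in $\mathcal{H}$, so $M_o \leq \alpha(\mathcal{H})$. The problem is now a pure independence-number question on a highly symmetric graph defined on the Boolean cube.

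The plan is to apply a weighted Hoffman-type spectral bound to $\mathcal{H}$. Bipartiteness of $A \cap B$ is detected by the quantity $\sum_{\chi} \prod_{uv \in A \cap B} \tfrac{1 - (-1)^{\chi_u + \chi_v}}{2}$ taken over $\chi:[n] \to \{0,1\}$, which is positive exactly when $A \cap B$ is bipartite. Expanding this product and summing over $\chi$ writes a suitable weighting of the adjacency operator of $\mathcal{H}$ in the Fourier basis of $\{\pm 1\}^{\binom{n}{2}}$, whose characters are indexed by edge-subsets $S \subseteq \binom{[n]}{2}$. Because the eigenvectors are labelled by subgraphs, the diagonalisation reduces to character sums over bipartitions of $[n]$, yielding eigenvalues that depend only on the isomorphism type of $S$.

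The crux is to show that Hoffman's spectral ratio $-\lambda_{\min}/(\lambda_{\max} - \lambda_{\min})$ for this operator equals $1/8$ and is achieved precisely on the eigenspace spanned by characters indexed by triangles. This is the main obstacle: a naive weighting will not produce a tight eigenvalue gap, so one must either choose the edge weights cleverly so that the extremal eigenvalues sit on the triangle eigenspace, or, following Ellis--Filmus--Friedgut, invoke a hypercontractive refinement of Hoffman's inequality to absorb contributions from larger subgraphs. Once the spectral computation is completed one obtains $\alpha(\mathcal{H}) \leq \tfrac{1}{8}\cdot 2^{\binom{n}{2}} = 2^{\binom{n}{2}-3}$, closing the chain and proving the theorem.
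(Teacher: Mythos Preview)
The paper does not contain a proof of this statement. Theorem~\ref{intersect} is quoted from Ellis, Filmus and Friedgut \cite{trintersect} purely as background and motivation; the authors of the present paper make no attempt to prove it, and indeed remark only that ``the union version of Theorem~\ref{intersect} is much easier'' before proving that separate (and far simpler) union statement in Section~\ref{other graphs}. So there is no proof in this paper to compare your proposal against.

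For what it is worth, your sketch does follow the genuine architecture of the Ellis--Filmus--Friedgut argument: the easy chain $N\le M_t\le M_o$, the reformulation as an independence problem on a Cayley-type graph over the Boolean cube, diagonalisation in the Fourier basis indexed by subgraphs, and the need to go beyond a naive Hoffman bound. You are also honest that the crux---getting the spectral ratio to equal $1/8$---is where the real work lies and cannot be done with an off-the-shelf weighting; in the original paper this step requires a carefully engineered operator together with hypercontractivity, and your proposal does not supply those details. As a high-level roadmap it is accurate, but it is a description of where the difficulty sits rather than a proof, and in any case it is not something the present paper undertakes.
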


%Theorem~\ref{intersect} deals with a similar question where instead of unions, we are interested in intersections without any restrictions on the graphs.

%The reason that we restrict our attention on Hamiltonian paths is that they correspond to permutations  and the problems that ultimately led the authors of \cite{komesi} to this question involved permutations, see \cite{colliding}.

\smallskip

We will show that the union version of Theorem \ref{intersect} is much easier in Section \ref{other graphs}. The main result of the present paper is the following theorem which answers the question of Körner, Messuti and Simonyi affirmatively.

\begin{thm} \label{main}
The maximum number of Hamiltonian paths such that every pairwise union contains a triangle is equal to the number of balanced bipartitions of the ground set.
\end{thm}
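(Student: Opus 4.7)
The upper bound is immediate from Proposition \ref{prop}, since a triangle is an odd cycle. The content of Theorem \ref{main} is therefore the matching construction: one must exhibit a family of Hamiltonian paths on $[n]$, of the size given by the proposition, every pairwise union of which contains a triangle.

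The plan is to assign to each balanced bipartition $\{A,B\}$ a Hamiltonian path $P_{A,B}$ that is a bipartite graph with parts $A$ and $B$. This automatically makes the family indexed bijectively by balanced bipartitions and, as in the proof of the upper bound of Proposition \ref{prop}, forces every odd cycle in a pairwise union to use edges from both paths. To build the family I would proceed recursively: fix a pivot vertex $v$ and, depending on which part of $\{A,B\}$ contains $v$, insert $v$ at a prescribed position into a path constructed for the induced balanced bipartition of $[n]\setminus\{v\}$. For the even-vertex case, a second pivot handles the factor $\tfrac12$ appearing in the bipartition count. The base case is small $n$, handled by hand as in \cite{komesi}.

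The main obstacle --- and the heart of the argument --- is verifying that the pairwise union of any two paths in this family contains a triangle, not merely some odd cycle. Naive insertion rules, such as zigzagging between the two parts in the natural order, already fail on five vertices, producing pairs of paths whose union is a chordless five-cycle. A workable rule must build enough local chord structure that any disagreement between two bipartitions closes a triangle. I would attempt the verification by a case analysis on the symmetric difference $A_1 \triangle A_2$: isolate a pair of vertices that swap sides between $\{A_1,B_1\}$ and $\{A_2,B_2\}$, and show that the insertion rule forces two edges incident to a common vertex in one path and the closing edge in the other, yielding a triangle spanned by that pair together with the common neighbour. The main technical step is to reduce the general case to this local picture by exploiting the recursive structure of the construction, so that pivots inserted at different stages interact controllably across the two paths being compared.
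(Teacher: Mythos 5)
The upper bound is fine, but the construction is the entire content of the theorem, and your proposal does not actually contain one. You correctly identify the central difficulty --- that a naive ``one path per balanced bipartition'' rule (e.g.\ zigzagging between the two parts in the natural order) produces pairwise unions that are merely odd cycles rather than triangles --- but you then only describe the shape of a rule you hope exists (``a workable rule must build enough local chord structure\dots'') and a verification you ``would attempt.'' The comparison of two paths built through different sequences of pivot insertions is precisely where a single-vertex recursion breaks down: the symmetric difference $A_1\triangle A_2$ can be large, the two paths need not share any common segment at which to anchor a triangle, and no mechanism is identified that forces two edges of one path and one edge of the other to meet at a common vertex. Without an explicit insertion rule and a proof that it closes a triangle for \emph{every} pair of balanced bipartitions, the proposal is a restatement of the problem rather than a solution; the base case ``handled by hand as in \cite{komesi}'' only goes up to $5$ vertices.

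For contrast, the paper does not build one path per bipartition by local insertion. It introduces weighted graphs whose edges prescribe distances ($1$ or $2$) in a Hamiltonian path, so that two paths whose types share an edge with different prescribed weights are automatically triangle-different; it organizes these constraints into ladders so that a single properly laddered weighted graph with $l$ ladders yields $2^l$ pairwise compatible paths (the Z-swapping construction of Lemma~\ref{swapping}, where the triangle sits at the top of the first ladder at which two $0$--$1$ sequences differ); and it reaches the count $\binom{2n+1}{n}$ by a two-stage induction between H-maximal and MH-maximal families (Lemmas~\ref{MH->H} and~\ref{finish}), using the identity $\sum_{k=0}^{n}\binom{n}{k}\binom{k}{\left\lfloor k/2\right\rfloor}2^{n-k}=\binom{2n+1}{n}$. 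The global coordination your plan leaves open is supplied there by the shared weight-$2$ matching of an MH-maximal family and by the symmetric differences of submatchings --- a genuinely more structured mechanism than pivot insertion, and one you would need some analogue of to make your outline into a proof.
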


Since the same upper bound holds as in Proposition~\ref{prop}, the main challenge is to construct a family of this size that satisfies the condition.

We prove Theorem~\ref{main} in Section~\ref{proof}. In Section~\ref{Hcdiff} we also investigate the case one can consider the "other extreme", where we look for large families of Hamiltonian paths any two of which forming a union that contains a Hamiltonian cycle.
%we direct our attention to the other extreme case,
We prove that the maximal number of such Hamiltonian-cycle-different Hamiltonian paths on $n$ vertices is at most $\binom{n}{2}$ and provide a construction which achieves this bound whenever $n$ is prime.
Section~\ref{opr} contains some open problems and concluding remarks.

\begin{table}[htbp]
\begin{center}
\renewcommand*{\arraystretch}{1.8}
  \begin{tabular}{l c c c}
    \hline
  Name &lower bound  & G  & upper bound   \\ \hline
 Körner, Messuti, Simonyi \cite{komesi} &  $1.18^{n-o(n)}$  &$K_4$ &  $ 1.5^{n+o(n)}$  \\
 Kovács, Soltész (this result) &  $2^{n-o(n)}$  & $K_3$ &  $2^{n-o(n)}$ \\
  Cohen, Fachini, Körner \cite{connector} &  $n^{\frac{1}{2}n-o(n)}$ &$  C_4 $&$ n^{\frac{3}{4}n+o(n)}$  \\
 Körner, Monti \cite{locsep} & $ n^{\frac{n}{2}}2^{-1.47n -o(n)} $ & \, $K_{1,4} $ \, & $ n^{\frac{n}{2}}2^{-0.72n +o(n)}$  \\ \hline
  \end{tabular}
\end{center}
\caption{The order of magnitude of the lower and upper bounds for the maximal size of pairwise $G$-different Hamiltonian paths for the first few non-trivial choices for $G$.}
\label{results}
\end{table}

\section{Proof of the main theorem} \label{proof}

In this section we prove Theorem~\ref{main}. The number of balanced bipartitions of $2n+1$ and $2n+2$ vertices is the same, namely $\binom{2n+1}{n}$. Thus it is enough to construct $\binom{2n+1}{n}$ triangle-different Hamiltonian paths on $2n+1$ vertices, since we can add an additional vertex and extend every original Hamiltonian path by an edge when we want a good construction of the same size on $2n+2$ vertices. We say that two Hamiltonian paths are \textit{compatible} if their union contains a triangle. We also say that a set of Hamiltonian paths is compatible if each pair of them is compatible. We will construct a compatible set of Hamiltonian paths of the required size. Within our construction the Hamiltonian paths will belong to several groups (called types) according to the following definition.

\begin{defi}\label{defi:Gtype}
Let $G$ be a graph with weighted edges where the weights are $1$ or $2$. We say that a Hamiltonian path $H$ on the vertex set of $G$ is $G$-type if the following holds. For every $u$ and $v$ that are connected in $G$ by an edge of weight $w$, the distance of $u$ and $v$ in $H$ is exactly $w$.
\end{defi}

By weighted graph we will always mean a weighted graph where the edges get weight $1$ or $2$. When drawing a weighted graph we will draw the edges of weight $1$ as ordinary lines, and the edges of weight $2$ by dashed lines. See Figure~\ref{example} for an illustration of Definition~\ref{defi:Gtype}.

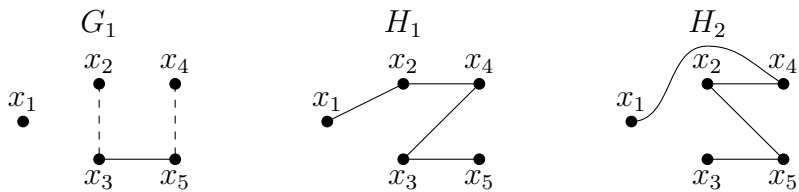
\begin{figure}[htbp,scale=0.5]
\begin{center}
\begin{tikzpicture}
\filldraw[black] (0,0) circle (2pt)node[anchor=south] {$x_1$};
\filldraw[black] (1,0.5) circle (2pt)node[anchor=south] {$x_2$};
\filldraw[black] (1,-0.5) circle (2pt)node[anchor=north] {$x_3$};
\filldraw[black] (2,0.5) circle (2pt)node[anchor=south] {$x_4$};
\filldraw[black] (2,-0.5) circle (2pt)node[anchor=north] {$x_5$};
\draw (1,-0.5)--(2,-0.5);
\draw[dashed] (1,0.5)--(1,-0.5);
\draw[dashed] (2,0.5)--(2,-0.5);
\draw (1,1.3) node[]{$G_1$};

\filldraw[black] (4,0) circle (2pt)node[anchor=south] {$x_1$};
\filldraw[black] (5,0.5) circle (2pt)node[anchor=south] {$x_2$};
\filldraw[black] (5,-0.5) circle (2pt)node[anchor=north] {$x_3$};
\filldraw[black] (6,0.5) circle (2pt)node[anchor=south] {$x_4$};
\filldraw[black] (6,-0.5) circle (2pt)node[anchor=north] {$x_5$};
\draw  (4,0) --(5,0.5)  ;
\draw  (5,0.5) --(6,0.5)  ;
\draw  (6,0.5) -- (5,-0.5) ;
\draw  (5,-0.5) -- (6,-0.5) ;
\draw (5,1.3) node[]{$H_1$};

\filldraw[black] (8,0) circle (2pt)node[anchor=south] {$x_1$};
\filldraw[black] (9,0.5) circle (2pt)node[anchor=south] {$x_2$};
\filldraw[black] (9,-0.5) circle (2pt)node[anchor=north] {$x_3$};
\filldraw[black] (10,0.5) circle (2pt)node[anchor=south] {$x_4$};
\filldraw[black] (10,-0.5) circle (2pt)node[anchor=north] {$x_5$};
\draw (8,0) to[out=0,in=180]  (9,1)  ; %(9,1) is only used to control the curvature of this edge
\draw (9,1) to[out=0,in=150]  (10,0.5)  ;
\draw (10,0.5) --  (9,0.5)  ;
\draw (9,0.5) -- (10,-0.5)   ;
\draw (10,-0.5) -- (9,-0.5)   ;
\draw (9,1.3) node[]{$H_2$};
\end{tikzpicture}
\caption{Both $H_1$ and $H_2$ are $G_1$-type.}
\label{example}
\end{center}
\end{figure}

Observe that if $G_1$ and $G_2$ are weighted graphs that share an edge that has a different weight in $G_1$ and in $G_2$, then every $G_1$-type Hamiltonian path is compatible with every $G_2$-type Hamiltonian path. Hence we define compatibility for weighted graphs too.

\begin{defi}
Two weighted graphs $G_1$ and $G_2$ are compatible if they share an edge that has different weight in $G_1$ and $G_2$. We say that a family of weighted graphs is compatible if every pair of weighted graphs from the family is compatible.
\end{defi}

Our strategy is to first build a compatible family of weighted graphs %$\{G_1,G_2, \ldots , G_i , \ldots\}$,
on a ground set of odd size. Then for each weighted graph $G_i$ in the family construct a set of compatible Hamiltonian paths that are $G_i$-type. To obtain what we want, we need that we build a suitable family so that we can construct enough Hamiltonian paths from it.

%Let us start to define the special class of weighted graphs which we will be working with.

\begin{defi}
A $k$-ladder is a weighted graph on the vertices $\{v_1, \ldots, v_k, w_1, \ldots, w_k\}$ where the edges of weight $1$ are $\{(v_1,w_1),\ldots, (v_k,w_k)\}$ and the edges of weight $2$ are $\{(v_1,v_2),(v_2,v_3), \ldots, (v_{k-1},v_k),(w_1,w_2),(w_2,w_3), \ldots, (w_{k-1},w_k)\}$. We say that a weighted graph is a (weighted) ladder if it is a $k$-ladder for some $k$, see Figure~\ref{ladders}. We also call the vertices $(v_1,w_1)$ and $(v_k,w_k)$ the top and the bottom of the ladder respectively, and we assume that for each ladder it is fixed which one of its edges is the top and which is the bottom.
\end{defi}

We remark that we do not introduce any notation for distinguishing the top and bottom of ladders except for drawing the ladders this way on the figures.

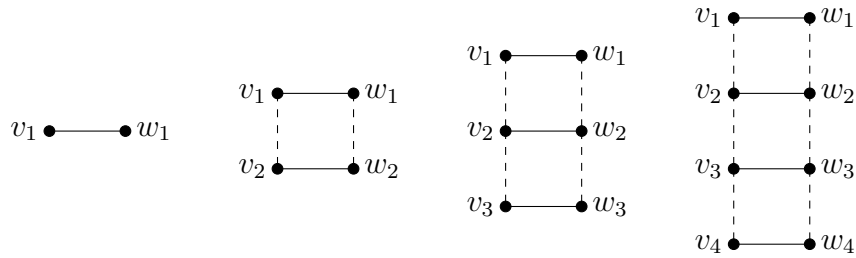
\begin{figure}[htbp,scale=0.5]
\begin{center}
\begin{tikzpicture}
\filldraw[black] (-7,-0.5) circle (2pt)node[anchor=east] {$v_1$};
\filldraw[black] (-6,-0.5) circle (2pt)node[anchor=west] {$w_1$};
\draw (-7,-0.5)--(-6,-0.5);

\filldraw[black] (-4,0) circle (2pt)node[anchor=east] {$v_1$};
\filldraw[black] (-3,0) circle (2pt)node[anchor=west] {$w_1$};
\filldraw[black] (-4,-1) circle (2pt)node[anchor=east] {$v_2$};
\filldraw[black] (-3,-1) circle (2pt)node[anchor=west] {$w_2$};
\draw (-4,0)--(-3,0);
\draw (-4,-1)-- (-3,-1);
\draw[dashed](-4,0) -- (-4,-1);
\draw[dashed](-3,0) -- (-3,-1);

\filldraw[black] (-1,0.5) circle (2pt)node[anchor=east] {$v_1$};
\filldraw[black] (-1,-0.5) circle (2pt)node[anchor=east] {$v_2$};
\filldraw[black] (-1,-1.5) circle (2pt)node[anchor=east] {$v_3$};

\filldraw[black] (0,0.5) circle (2pt)node[anchor=west] {$w_1$};
\filldraw[black] (0,-0.5) circle (2pt)node[anchor=west] {$w_2$};
\filldraw[black] (0,-1.5) circle (2pt)node[anchor=west] {$w_3$};
\draw (-1,0.5)--(0,0.5);%steps
\draw (-1,-0.5)--(0,-0.5);
\draw (-1,-1.5)--(0,-1.5);%stepsend
\draw[dashed](-1,0.5) -- (-1,-0.5);
\draw[dashed](-1,-0.5) -- (-1,-1.5);
\draw[dashed](0,0.5) -- (0,-0.5);
\draw[dashed](0,-0.5) -- (0,-1.5);

\filldraw[black] (2,1) circle (2pt)node[anchor=east] {$v_1$};
\filldraw[black] (2,0) circle (2pt)node[anchor=east] {$v_2$};
\filldraw[black] (2,-1) circle (2pt)node[anchor=east] {$v_3$};
\filldraw[black] (2,-2) circle (2pt)node[anchor=east] {$v_4$};

\filldraw[black] (3,1) circle (2pt)node[anchor=west] {$w_1$};
\filldraw[black] (3,0) circle (2pt)node[anchor=west] {$w_2$};
\filldraw[black] (3,-1) circle (2pt)node[anchor=west] {$w_3$};
\filldraw[black] (3,-2) circle (2pt)node[anchor=west] {$w_4$};
\draw (2,1) -- (3,1) ;%steps
\draw (2,0) -- (3,0) ;
\draw (2,-1) -- (3,-1) ;
\draw (2,-2) -- (3,-2) ;%stepsend

\draw[dashed](2,1) -- (2,0);
\draw[dashed](2,0) -- (2,-1);
\draw[dashed](2,-1) -- (2,-2);
\draw[dashed](3,1) -- (3,0);
\draw[dashed](3,0) -- (3,-1);
\draw[dashed](3,-1) -- (3,-2);
\end{tikzpicture}
\caption{$k$-ladders for $k=0,1,2,3$. The edge $(v_1,w_1)$ is the top of each ladder.}
\label{ladders}
\end{center}
\end{figure}

Now we define a special class of weighted graphs.

%We will only work with weighted graphs from this class for the rest of the paper.

\begin{defi} We call a weighted graph \textit{properly laddered} if it is the disjoint union of ladders, an isolated vertex called the apex, and a so called residual part. This residual part can either be an empty graph, a single edge or the union of two vertex-disjoint paths on $m+2$ and $m$ vertices, respectively, for some positive integer $m$. All the edges in the residual part get weight~$2$.

%Let $\mathcal{G}$ be the class of weighted graphs that consists of the disjoint union of a single isolated vertex, some weighted ladders and possibly at most two paths, such that the number of vertices in the paths differ by exactly two and the weights of the edges of the paths are $2$, see Figure~\ref{structure}. We call these paths the residual part of the weighted graph, and we call the isolated vertex the apex. We allow the residual part to consist of only a single edge of weight two, as it is the union of a path with $2$ vertices and a path with $0$ vertices.
\end{defi}

\begin{figure}[htbp,scale=0.5]
\begin{center}
\begin{tikzpicture}
\filldraw[black] (-8.5,-0.5) circle (2pt)node[anchor=east] {$x_1$};

\filldraw[black] (-7,-0.5) circle (2pt)node[anchor=east] {$x_2$};
\filldraw[black] (-6,-0.5) circle (2pt)node[anchor=west] {$x_3$};
\draw (-7,-0.5)--(-6,-0.5);

\filldraw[black] (-4,0) circle (2pt)node[anchor=east] {$x_4$};
\filldraw[black] (-3,0) circle (2pt)node[anchor=west] {$x_5$};
\filldraw[black] (-4,-1) circle (2pt)node[anchor=east] {$x_6$};
\filldraw[black] (-3,-1) circle (2pt)node[anchor=west] {$x_7$};
\draw (-4,0)--(-3,0);
\draw (-4,-1)-- (-3,-1);
\draw[dashed](-4,0) -- (-4,-1);
\draw[dashed](-3,0) -- (-3,-1);

\filldraw[black] (-1,-0.5) circle (2pt)node[anchor=east] {$x_8$};
\filldraw[black] (0,-0.5) circle (2pt)node[anchor=west] {$x_9$};
\draw (-1,-0.5)--(0,-0.5);

\filldraw[black] (2,1) circle (2pt)node[anchor=east] {$x_{10}$};
\filldraw[black] (2,0) circle (2pt)node[anchor=east] {$x_{11}$};
\filldraw[black] (2,-1) circle (2pt)node[anchor=east] {$x_{12}$};
\filldraw[black] (2,-2) circle (2pt)node[anchor=east] {$x_{13}$};
\filldraw[black] (3,1) circle (2pt)node[anchor=west] {$x_{14}$};
\filldraw[black] (3,0) circle (2pt)node[anchor=west] {$x_{15}$};
\draw[dashed](2,1) -- (2,0);
\draw[dashed](2,0) -- (2,-1);
\draw[dashed](2,-1) -- (2,-2);
\draw[dashed](3,1) -- (3,0);
\end{tikzpicture}
\caption{A properly laddered weighted graph. The apex is $x_1$ and the subgraph spanned by the vertices $x_{10},x_{11},x_{12},x_{13},x_{14},x_{15}$ is the residual part. }
\label{structure}
\end{center}
\end{figure}
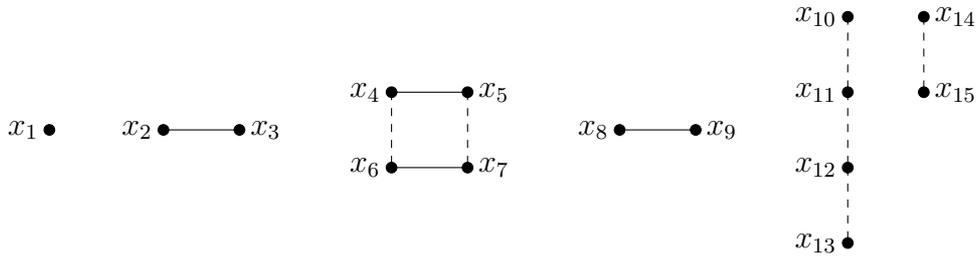

From now on every weighted graph that we use will be properly laddered. 
%We only emphasize it when we actually make use of this property. 
The proof of the next Lemma describes a construction we will use to convert a properly laddered weighted graph to a set of compatible Hamiltonian paths. We will refer to this construction as the {\em Z-swapping construction}.
%Applying it to each member of a family of compatible properly laddered %weighted graphs we will obtain a family of such sets of compatible %Hamiltonian paths the union of which is still a set of compatible %Hamiltonian paths.

%the so called Z-swapping construction which is the only method that we %will use to construct Hamiltonian paths from properly laddered weighted %graphs.

\begin{lem}[Z-swapping construction] \label{swapping}
Let $G$ be a properly laddered weighted graph and let $l$ denote the number of ladders in $G$. There is a set of $2^l$ compatible Hamiltonian paths which are all $G$-type.
\end{lem}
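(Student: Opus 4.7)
The plan is to exhibit, for every assignment of ``orientations'' of the $l$ ladders, one Hamiltonian path that is $G$-type, and then to verify that any two paths obtained this way have a triangle in their union.

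I would first fix a global arrangement of the vertices in which the apex sits just before, or inside, the block of ladders. If the residual is empty, take $a, L_1, L_2, \ldots, L_l$, each ladder written as a contiguous block. If the residual is a single weight-$2$ edge $\{b_1,b_2\}$, take $b_1, a, b_2, L_1, \ldots, L_l$, so that $b_1$ and $b_2$ end up at distance $2$. If the residual is two paths $p_1 p_2 \cdots p_{m+2}$ and $q_1 q_2 \cdots q_m$ of weight-$2$ edges, interleave them and slip the apex in to absorb the length mismatch,
\[
p_1, q_1, p_2, q_2, \ldots, p_m, q_m, p_{m+1}, a, p_{m+2}, L_1, \ldots, L_l.
\]
A direct check shows that in every case each weight-$2$ edge of the residual connects two vertices at distance exactly $2$ in the emerging path. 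Within each ladder $L_j$ with rungs $v_i^j w_i^j$, $i = 1,\ldots, k_j$, I would pick one of the two zigzags
\[
v_1^j, w_1^j, v_2^j, w_2^j, \ldots, v_{k_j}^j, w_{k_j}^j \quad\text{or}\quad w_1^j, v_1^j, w_2^j, v_2^j, \ldots, w_{k_j}^j, v_{k_j}^j,
\]
encoded by a bit $s_j \in \{0,1\}$. Either zigzag turns every weight-$1$ rung $v_i^j w_i^j$ into an edge of $H$ and every weight-$2$ edge of $L_j$ into a length-$2$ subpath, so every $s \in \{0,1\}^l$ produces a $G$-type Hamiltonian path $H_s$; the $2^l$ paths so obtained are pairwise distinct because the bit $s_j$ determines the first vertex of the $L_j$-block.

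For compatibility, I would take two distinct $s, t \in \{0,1\}^l$, let $j^\star$ be the smallest coordinate on which they disagree, and denote by $x$ the vertex in the common arrangement that immediately precedes the block of $L_{j^\star}$. Because $H_s$ and $H_t$ agree on the preamble and on the first $j^\star - 1$ ladders, this $x$ is the same vertex in both paths, and since the ladders are vertex-disjoint, $x$ lies outside $L_{j^\star}$. The zigzag convention forces one of $H_s, H_t$ to contain the edge $x v_1^{j^\star}$ while the other contains $x w_1^{j^\star}$, and both contain the weight-$1$ edge $v_1^{j^\star} w_1^{j^\star}$. Hence the vertices $x, v_1^{j^\star}, w_1^{j^\star}$ span a triangle in $H_s \cup H_t$, so the two paths are compatible.

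The main obstacle is really the bookkeeping for the residual part: one has to place the apex so that the length-mismatched distance-$2$ constraints of the residual are satisfied and, simultaneously, the first ladder $L_1$ is preceded in the arrangement by an ordinary vertex $x$ (needed for the triangle argument when $j^\star = 1$). Once the arrangement is pinned down, both the $G$-type property and the extraction of the triangle reduce to direct inspection of the zigzag structure.
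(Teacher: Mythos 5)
Your construction coincides with the paper's own proof: the same placement of the apex relative to the residual part (including the interleaving $p_1,q_1,\dots,p_m,q_m,p_{m+1},a,p_{m+2}$), the same two zigzag traversals per ladder indexed by a bit string of length $l$, and the same triangle formed by the last common vertex before the first differing ladder together with the two top vertices of that ladder. The argument is correct and essentially identical to the one in the paper.
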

\begin{proof}
We construct a set of $2^l$ compatible $G$-type Hamiltonian paths. 
(For simplicity we may think about our Hamiltonian paths as if they were oriented to make the term "start" of the path more appropriate. We do not really need to deal with directed edges, however.)
 %assume that the Hamiltonian paths are directed, so that we can refer to %the ends of the paths in a convenient way.
Let us denote the apex of $G$ by $x_0$. Each of our Hamiltonian paths starts at the same vertex (that may or may not be $x_0$ according to the rules below). We have three cases depending on the size of the residual part of $G$.

\begin{enumerate}

\item[a)] If the residual part of $G$ is empty, every path starts from the apex $x_0$.
\item[b)] If the residual part of $G$ consists of a single edge $(x_1,x_2)$, then every Hamiltonian path starts with the path $x_1x_0x_2$.
\item[c)] If the residual part of $G$ consists of the two paths $x_1\ldots x_m$ and $y_1\ldots y_{m+2}$ then each Hamiltonian path starts with the path $y_1 x_1 y_2 x_2\ldots y_m x_m y_{m+1}x_0 y_{m+2}$.

\end{enumerate}

See Figure~\ref{residual} for an illustration of the above cases.

% and if $G$ has a residual part, they will share their first few edges too. If $G$ does not contain a residual part, every Hamiltonian path starts from the apex of $G$. If the residual part of $G$ consists of a single edge of weight $2$ every Hamiltonian path starts from a fixed endpoint of this edge, and uses the apex to connect it to the other endpoint with a path of length two. If $G$ has a residual part that consists of two paths: $P_{m+2}$ and $P_{m}$, then let us fix an endpoint of $P_{m+2}$ and every Hamiltonian path starts from this endpoint, see for illustration Figure~\ref{residual} $(a)$. It visits the vertices of the paths alternately until $P_{m}$ is exhausted, see Figure~\ref{residual} $(b)$. Then it uses the apex to get to the end of $P_{m+2}$, see Figure~\ref{residual} $(c)$.

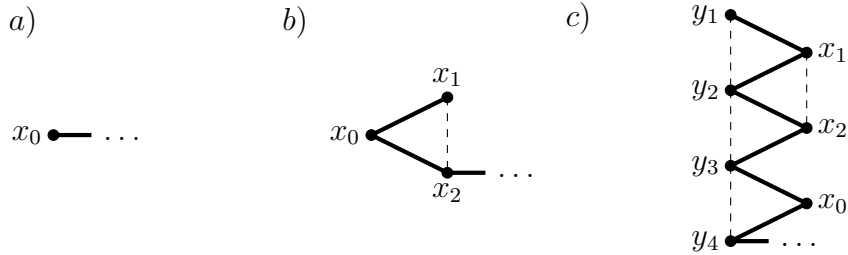
\begin{figure}[htbp,scale=0.5]
\begin{center}
\begin{tikzpicture}
\filldraw[black] (0.4,-0.5) circle (2pt)node[anchor=east] {$x_0$};

\draw[ultra thick] (0.4,-0.5)--(0.9,-0.5);

%
%\filldraw[black] (2,1) circle (3pt)node[anchor=east] {$x_{10}$};
%\filldraw[black] (2,1.2) circle (0pt)node[anchor=south] {$start$};
%\filldraw[black] (2,0) circle (2pt)node[anchor=east] {$x_{11}$};
%\filldraw[black] (2,-1) circle (2pt)node[anchor=east] {$x_{12}$};
%\filldraw[black] (2,-2) circle (2pt)node[anchor=east] {$x_{13}$};
%\filldraw[black] (3,0.5) circle (2pt)node[anchor=west] {$x_{14}$};
%\filldraw[black] (3,-0.50) circle (2pt)node[anchor=west] {$x_{15}$};
%\draw[dashed](2,1) -- (2,0);
%\draw[dashed](2,0) -- (2,-1);
%\draw[dashed](2,-1) -- (2,-2);
%\draw[dashed](3,0.5) -- (3,-0.5);
\draw (0,1) node[]{$a)$};

\draw (1.3,-0.54) node[]{$ \ldots $ };

\draw (3,0) node[]{ };
\draw (0,-2.04) node[]{ };

\end{tikzpicture}
\begin{tikzpicture}
\filldraw[black] (1,-0.5) circle (2pt)node[anchor=east] {$x_0$};

%\filldraw[black] (2,1) circle (2pt)node[anchor=east] {$x_{10}$};
\filldraw[black] (2,0) circle (2pt)node[anchor=south] {$x_{1}$};
\filldraw[black] (2,-1) circle (2pt)node[anchor=north] {$x_{2}$};
%\filldraw[black] (2,-2) circle (2pt)node[anchor=east] {$x_{13}$};
%\filldraw[black] (3,0.5) circle (2pt)node[anchor=west] {$x_{14}$};
%\filldraw[black] (3,-0.50) circle (2pt)node[anchor=west] {$x_{15}$};
%\draw[dashed](2,1) -- (2,0);
\draw[dashed](2,0) -- (2,-1);
%\draw[dashed](2,-1) -- (2,-2);
%\draw[dashed](3,0.5) -- (3,-0.5);
%
\draw[ultra thick] (2,0) -- (1,-0.5)  ;
\draw[ultra thick]  (1,-0.5)-- (2,-1)  ;
%\draw[ultra thick] (2,0) -- (3,-0.5)  ;
%\draw[ultra thick] (3,-0.5) --  (2,-1) ;
\draw[ultra thick](0,1) node[]{$b)$};

\draw (0,-2.04) node[]{ };

\draw[ultra thick] (2,-1) --(2.5,-1) ;
\draw (2.9,-1.04) node[]{$ \ldots $ };

\draw (3,0) node[]{ };
\end{tikzpicture}
\begin{tikzpicture}

\filldraw[black] (2,1) circle (2pt)node[anchor=east] {$y_{1}$};
\filldraw[black] (2,0) circle (2pt)node[anchor=east] {$y_{2}$};
\filldraw[black] (2,-1) circle (2pt)node[anchor=east] {$y_{3}$};
\filldraw[black] (2,-2) circle (2pt)node[anchor=east] {$y_{4}$};
\filldraw[black] (3,0.5) circle (2pt)node[anchor=west] {$x_{1}$};
\filldraw[black] (3,-0.5) circle (2pt)node[anchor=west] {$x_{2}$};
\filldraw[black] (3,-1.5) circle (2pt)node[anchor=west] {$x_0$};
\draw[dashed](2,1) -- (2,0);
\draw[dashed](2,0) -- (2,-1);
\draw[dashed](2,-1) -- (2,-2);
\draw[dashed](3,0.5) -- (3,-0.5);

\draw[ultra thick] (2,1) -- (3,0.5)  ;
\draw[ultra thick]  (3,0.5)-- (2,0)  ;
\draw[ultra thick] (2,0) -- (3,-0.5)  ;
\draw[ultra thick] (3,-0.5) --  (2,-1) ;
\draw[ultra thick]  (2,-1)--  (3,-1.5) ;
\draw[ultra thick] (3,-1.5) -- (2,-2)   ;
\draw (0,0.96) node[]{$c)$};

\draw[ultra thick] (2,-2) --(2.5,-2) ;
\draw (2.9,-2.04) node[]{$ \ldots $ };

\draw (3,0) node[]{ };
\draw (0,-2.04) node[]{ };

\draw (3,0) node[]{ };
\end{tikzpicture}

\end{center}

 \caption{The shared edges of the Hamiltonian paths according to the residual part of $G$.}
\label{residual}

\end{figure}

Let $R$ be the subgraph of $G$ induced by the residual part and the apex. Observe that the already constructed parts of the Hamiltonian paths are $R$-type.

Now we direct our attention to the ladders of $G$. Fix an ordering of the ladders. We construct Hamiltonian paths that correspond to $0-1$ sequences of length $l$. Each Hamiltonian path visits (contains the weight $1$ edges of) the ladders in the prescribed order. (When a Hamiltonian path visits a ladder it will traverse all its weight $1$ edges before visiting the next ladder.)
When a Hamiltonian path visits the $i$th ladder on the vertices $\{v_{1}^{i},\ldots, v_{k_i}^{i},w_{1}^{i},\ldots,w_{k_i}^{i}\}$ where the top of the ladder is the pair $(v_{1}^{i},w_{1}^{i})$, it chooses from two possible paths: $v_{1}^{i}w_{1}^{i}v_{2}^{i}w_{2}^{i} \ldots v_{k_i}^{i}w_{k_i}^{i}$ or $w_{1}^{i}v_{1}^{i}w_{2}^{i}v_{2}^{i} \ldots, w_{k}^{i}v_{k_i}^{i}$ according to the $i$th coordinate of the $0-1$ sequence, see Figure~\ref{more general} (where the single indexed $x_j$'s represent the vertices $v_{r}^{i}$ and $w_{r}^{i}$).

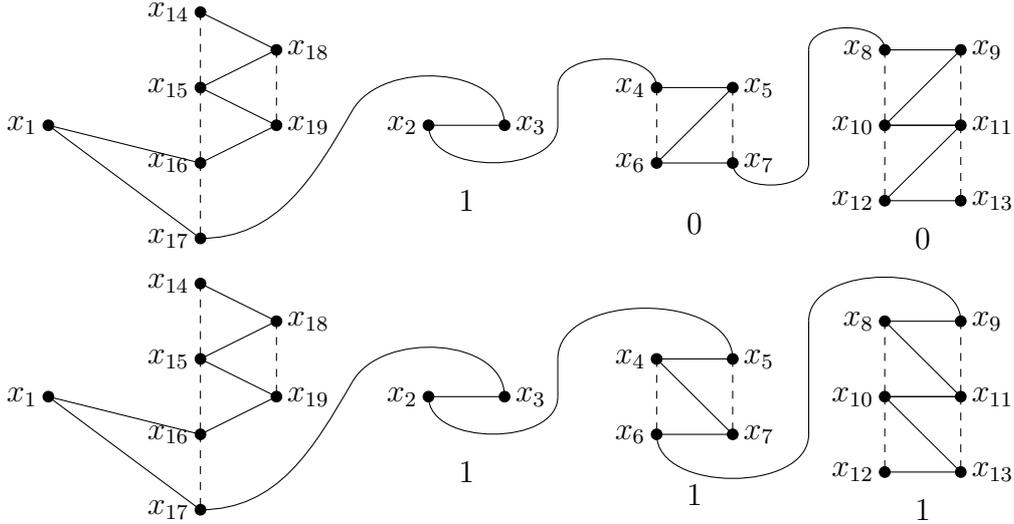
\begin{figure}[htbp,scale=0.5]
\begin{center}
\begin{tikzpicture}
\filldraw[black] (0,-0.5) circle (2pt)node[anchor=east] {$x_1$};

\filldraw[black] (2,1) circle (2pt)node[anchor=east] {$x_{14}$};
\filldraw[black] (2,0) circle (2pt)node[anchor=east] {$x_{15}$};
\filldraw[black] (2,-1) circle (2pt)node[anchor=east] {$x_{16}$};
\filldraw[black] (2,-2) circle (2pt)node[anchor=east] {$x_{17}$};
\filldraw[black] (3,0.5) circle (2pt)node[anchor=west] {$x_{18}$};
\filldraw[black] (3,-0.5) circle (2pt)node[anchor=west] {$x_{19}$};
\draw[dashed](2,1) -- (2,0);
\draw[dashed](2,0) -- (2,-1);
\draw[dashed](2,-1) -- (2,-2);
\draw[dashed](3,0.5) -- (3,-0.5);

\draw (2,1) -- (3,0.5)  ;
\draw  (3,0.5)-- (2,0)  ;
\draw (2,0) -- (3,-0.5)  ;
\draw (3,-0.5) --  (2,-1) ;
\draw  (2,-1)--  (0,-0.5) ;
\draw (0,-0.5) -- (2,-2)   ;

\filldraw[black] (5,-0.5) circle (2pt)node[anchor=east] {$x_2$};
\filldraw[black] (6,-0.5) circle (2pt)node[anchor=west] {$x_3$};
\draw (5,-0.5)--(6,-0.5);
\draw (5.5,-1.5) node[]{$1$};

\filldraw[black] (8,0) circle (2pt)node[anchor=east] {$x_4$};
\filldraw[black] (9,0) circle (2pt)node[anchor=west] {$x_5$};
\filldraw[black] (8,-1) circle (2pt)node[anchor=east] {$x_6$};
\filldraw[black] (9,-1) circle (2pt)node[anchor=west] {$x_7$};
\draw (8,0)--(9,0);
\draw (8,-1)-- (9,-1);
\draw[dashed](8,0) -- (8,-1);
\draw[dashed](9,0) -- (9,-1);
\draw (8.5,-1.8) node[]{$0$};

%\filldraw[black] (11,-0.5) circle (2pt)node[anchor=east] {$x_8$};
%\filldraw[black] (12,-0.5) circle (2pt)node[anchor=west] {$x_9$};
\draw (11,-0.5)--(12,-0.5);
\draw (11.5,-2) node[]{$0$};

\draw  (2,-2) to[out=0,in=-120]  (4,-0.3)  ;
\draw  (4,-0.3) to[out=60,in=90]   (6,-0.5)  ;
\draw  (5,-0.5) to[out=-90,in=-90]  (6.7,-0.5)  ;
\draw   (6.7,-0.5)to[out=90,in=-90]  (6.7,0)  ;
\draw  (6.7,0) to[out=90,in=90]  (8,0)  ;

\draw (9,0)-- (8,-1);
\draw  (9,-1) to[out=-90,in=-90]  (10,-1)  ;
\draw  (10,-1) to[out=90,in=-90]  (10,0.5)  ;
\draw  (10,0.5) to[out=90,in=90]  (11,0.5)  ;

\draw (11,0.5)-- (12,0.5);
\draw (11,-0.5)-- (12,-0.5);
\draw (11,-1.5)-- (12,-1.5);

\draw[dashed](11,0.5) -- (11,-0.5);
\draw[dashed] (11,-0.5)-- (11,-1.5) ;
\draw[dashed] (12,0.5)-- (12,-0.5);
\draw[dashed]  (12,-0.5)-- (12,-1.5);

\filldraw[black] (11,0.5) circle (2pt)node[anchor=east] {$x_8$};
\filldraw[black] (11,-0.5) circle (2pt)node[anchor=east] {$x_{10}$};
\filldraw[black] (11,-1.5) circle (2pt)node[anchor=east] {$x_{12}$};

\filldraw[black] (12,0.5) circle (2pt)node[anchor=west] {$x_9$};
\filldraw[black] (12,-0.5) circle (2pt)node[anchor=west] {$x_{11}$};
\filldraw[black] (12,-1.5) circle (2pt)node[anchor=west] {$x_{13}$};

\draw (12,0.5) --(11,-0.5) ;
\draw (12,-0.5) -- (11,-1.5) ;

\end{tikzpicture}

\begin{tikzpicture}
\filldraw[black] (0,-0.5) circle (2pt)node[anchor=east] {$x_1$};

\filldraw[black] (2,1) circle (2pt)node[anchor=east] {$x_{14}$};
\filldraw[black] (2,0) circle (2pt)node[anchor=east] {$x_{15}$};
\filldraw[black] (2,-1) circle (2pt)node[anchor=east] {$x_{16}$};
\filldraw[black] (2,-2) circle (2pt)node[anchor=east] {$x_{17}$};
\filldraw[black] (3,0.5) circle (2pt)node[anchor=west] {$x_{18}$};
\filldraw[black] (3,-0.5) circle (2pt)node[anchor=west] {$x_{19}$};
\draw[dashed](2,1) -- (2,0);
\draw[dashed](2,0) -- (2,-1);
\draw[dashed](2,-1) -- (2,-2);
\draw[dashed](3,0.5) -- (3,-0.5);

\draw (2,1) -- (3,0.5)  ;
\draw  (3,0.5)-- (2,0)  ;
\draw (2,0) -- (3,-0.5)  ;
\draw (3,-0.5) --  (2,-1) ;
\draw  (2,-1)--  (0,-0.5) ;
\draw (0,-0.5) -- (2,-2)   ;

\filldraw[black] (5,-0.5) circle (2pt)node[anchor=east] {$x_2$};
\filldraw[black] (6,-0.5) circle (2pt)node[anchor=west] {$x_3$};
\draw (5,-0.5)--(6,-0.5);
\draw (5.5,-1.5) node[]{$1$};

\filldraw[black] (8,0) circle (2pt)node[anchor=east] {$x_4$};
\filldraw[black] (9,0) circle (2pt)node[anchor=west] {$x_5$};
\filldraw[black] (8,-1) circle (2pt)node[anchor=east] {$x_6$};
\filldraw[black] (9,-1) circle (2pt)node[anchor=west] {$x_7$};
\draw (8,0)--(9,0);
\draw (8,-1)-- (9,-1);
\draw[dashed](8,0) -- (8,-1);
\draw[dashed](9,0) -- (9,-1);
\draw (8.5,-1.8) node[]{$1$};

%\filldraw[black] (11,-0.5) circle (2pt)node[anchor=east] {$x_8$};
%\filldraw[black] (12,-0.5) circle (2pt)node[anchor=west] {$x_9$};
\draw (11,-0.5)--(12,-0.5);
\draw (11.5,-2) node[]{$1$};

\draw  (2,-2) to[out=0,in=-120]  (4,-0.3)  ;
\draw  (4,-0.3) to[out=60,in=90]   (6,-0.5)  ;
%\draw  (5,-0.5) to[out=-90,in=-110]  (6.7,-0.5)  ;
%\draw  (6.7,-0.5) to[out=70,in=90]  (9,0)  ;
\draw (8,0)-- (9,-1);
%\draw  (8,-1) to[out=-90,in=-120]  (10,-0.7)  ;
%\draw  (10,-0.7) to[out=60,in=90]  (12,-0.5)  ;

\draw  (5,-0.5) to[out=-90,in=-90]  (6.7,-0.5)  ;
\draw   (6.7,-0.5)to[out=90,in=-90]  (6.7,0)  ;
\draw  (6.7,0) to[out=90,in=90]  (9,0)  ;

\draw  (8,-1) to[out=-90,in=-90]  (10,-1)  ;
\draw  (10,-1) to[out=90,in=-90]  (10,0.5)  ;
\draw  (10,0.5) to[out=90,in=90]  (12,0.5)  ;

\draw (11,0.5)-- (12,0.5);
\draw (11,-0.5)-- (12,-0.5);
\draw (11,-1.5)-- (12,-1.5);

\filldraw[black] (11,0.5) circle (2pt)node[anchor=east] {$x_8$};
\filldraw[black] (11,-0.5) circle (2pt)node[anchor=east] {$x_{10}$};
\filldraw[black] (11,-1.5) circle (2pt)node[anchor=east] {$x_{12}$};

\filldraw[black] (12,0.5) circle (2pt)node[anchor=west] {$x_9$};
\filldraw[black] (12,-0.5) circle (2pt)node[anchor=west] {$x_{11}$};
\filldraw[black] (12,-1.5) circle (2pt)node[anchor=west] {$x_{13}$};

\draw[dashed](11,0.5) -- (11,-0.5);
\draw[dashed] (11,-0.5)-- (11,-1.5) ;
\draw[dashed] (12,0.5)-- (12,-0.5);
\draw[dashed]  (12,-0.5)-- (12,-1.5);

\draw (11,0.5) --(12,-0.5) ;
\draw (11,-0.5) -- (12,-1.5) ;

\end{tikzpicture}
 \caption{The paths that satisfy a ladder contain many parts that resemble a "Z" or a reversed "Z" shape, according to the choice made at the top, hence the name Z-swapping construction.}
\label{more general}
\end{center}
\end{figure}

Clearly, every Hamiltonian path constructed this way is $G$-type. These Hamiltonian paths are compatible:  For two such Hamiltonian paths, let $i$ be the first coordinate where their $0-1$ sequences differ. Until the bottom of the $(i-1)$th ladder the two paths consist of the same edges. There is a triangle that consists of the last vertex of the paths at the bottom of the $(i-1)$th ladder and the two vertices at the top of the $i$th ladder (see Figure~\ref{more general}).
 Thus we have constructed as many pairwise compatible $G$-type Hamiltonian paths as many different $0-1$ sequences of length $l$ exist and this completes the proof.

\end{proof}

\begin{remark}
Although we will not need this, we mention that it is not hard to prove that the size $2^l$ in Lemma \ref{swapping} is best possible. Here is a sketch of the proof. A Hamiltonian path is a bipartite graph. Observe that the vertices on one side of a ladder must be on the same side of the bipartition, and the vertices on the other side of the same ladder must be on the other side of the bipartition. Since a Hamiltonian path is a balanced bipartite graph, the vertices of the residual part must be distributed in such a way, that the longer path is on one side, and the shorter path plus the apex is on the other, this distinguishes one side of the bipartition. Thus our only possibility to construct $G$-type  Hamiltonian paths for a properly laddered $G$ with different bipartitions is to "swap" the sides of the ladders between the sides of the bipartition. Which is exactly what happens in the proof of Lemma \ref{swapping}.
\end{remark}

%\begin{remark}
%This construction works for any ordering of the ladders. Let $\mathcal{G}_n$ be the graph with the vertices the Hamiltonian paths on $n$ vertices, and two such vertices are connected if the Hamiltonian paths are compatible. Lemma~\ref{swapping} not only guarantees a clique of size $2^l$ in $\mathcal{G}_n$, but actually $l!$ (usually overlapping) cliques. Determining the maximal number of compatible Hamiltonian paths is equivalent to determine $\omega (\mathcal{G}_n)$, which we do in Theorem~\ref{main}. Since we will use Lemma~\ref{swapping} to prove Theorem~\ref{main}, we will actually construct many maximal cliques $\mathcal{G}_n$. 
%\end{remark}

For a weighted graph $G$, we refer to the construction of Hamiltonian paths in Lemma~\ref{swapping} as applying the Z-swapping construction to $G$. We also apply the Z-swapping construction to a family of weighted graphs and by this we mean that we apply it to each weighted graph in the family and we take the union of the resulting sets of compatible Hamiltonian paths.

Now we already know how to construct Hamiltonian paths from weighted graphs, thus we are left with the task of building a compatible family of weighted graphs from which we can construct the right number of  Hamiltonian paths. From now on we will mainly work with weighted graphs. %First we will define two families of weighted graphs with numerous properties, than we show that the existence of such families is enough to prove Theorem~\ref{main}. Finally we present a procedure to construct such families.

\begin{defi}
We say that a compatible family $\mathcal{F}$ of properly laddered weighted graphs on $2n+1$ vertices is H-maximal if applying the Z-swapping construction to $\mathcal{F}$ we get $\binom{2n+1}{n}$ Hamiltonian paths.
\end{defi}

Thus we can get the maximum possible number of Hamiltonian paths from a H-maximal family using the Z-swapping construction. Our goal is to build H-maximal families. To do this we define a similar family which satisfies an additional condition.

\begin{defi}
We say that a compatible family $\mathcal{F}$ of properly laddered weighted graphs on a ground set of size $2k+1$ is MH-maximal if there is a matching $M$ of size $k$ that is contained in every weighted graph in $\mathcal{F}$, such that every edge of $M$ gets weight $2$ in every weighted graph and applying the Z-swapping construction to $\mathcal{F}$ we get $\binom{k}{\left \lfloor \frac{k}{2} \right \rfloor}$ Hamiltonian paths.
\end{defi}

%\begin{defi}
%We say that a set of graphs $\{G_i\}_{i=1}^{m}$ from $ \mathcal{G}$ on a ground set of size $2k+1$ is an auxiliary Z-family if the following holds. Every pair of graphs from $\{G_i\}_{i=1}^{m}$ is compatible. There exists a matching $M$ of size $k$ such that every $G_i$ contains the edges of $M$ with weight $2$. Moreover if the number of ladders in $G_i$ is exactly $l_i$, then $\sum_{i=1}^{m}2^{l_i}= \binom{k}{\left\lfloor \frac{k}{2} \right\rfloor}$.
%\end{defi}

 Finding MH-maximal families for small vertex sets is not difficult, see Figure~\ref{supportZexists}, but it is nontrivial whether they exist for all odd-element vertex sets. (We will prove that they do in Lemma~\ref{finish}.)

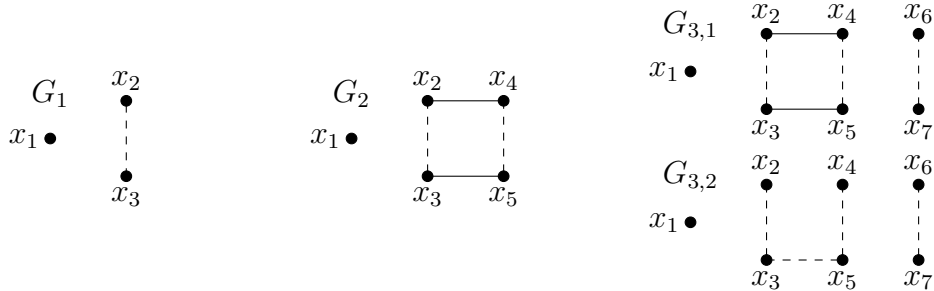
\begin{figure}[htbp,scale=0.5]
\begin{center}
\begin{tikzpicture}
\filldraw[black] (0,0) circle (2pt)node[anchor=east] {$x_1$};
\filldraw[black] (1,0.5) circle (2pt)node[anchor=south] {$x_2$};
\filldraw[black] (1,-0.5) circle (2pt)node[anchor=north] {$x_3$};
\draw[dashed](1,-0.5)--(1,0.5);
\draw (0,0.6) node[]{$G_1$};
\draw(3,-2) node[]{};
\end{tikzpicture}
\begin{tikzpicture}
\filldraw[black] (5,0) circle (2pt)node[anchor=east] {$x_1$};
\filldraw[black] (6,0.5) circle (2pt)node[anchor=south] {$x_2$};
\filldraw[black] (6,-0.5) circle (2pt)node[anchor=north] {$x_3$};
\filldraw[black] (7,0.5) circle (2pt)node[anchor=south] {$x_4$};
\filldraw[black] (7,-0.5) circle (2pt)node[anchor=north] {$x_5$};
\draw[dashed](6,0.5)--(6,-0.5);
\draw[dashed](7,0.5)--(7,-0.5);
\draw(6,-0.5)--(7,-0.5);
\draw(6,0.5)--(7,0.5);
\draw (5,0.6) node[]{$G_2$};
\draw(8.5,-2) node[]{};
\end{tikzpicture}
\begin{tikzpicture}
\filldraw[black] (5,0) circle (2pt)node[anchor=east] {$x_1$};
\filldraw[black] (6,0.5) circle (2pt)node[anchor=south] {$x_2$};
\filldraw[black] (6,-0.5) circle (2pt)node[anchor=north] {$x_3$};
\filldraw[black] (7,0.5) circle (2pt)node[anchor=south] {$x_4$};
\filldraw[black] (7,-0.5) circle (2pt)node[anchor=north] {$x_5$};
\filldraw[black] (8,0.5) circle (2pt)node[anchor=south] {$x_6$};
\filldraw[black] (8,-0.5) circle (2pt)node[anchor=north] {$x_7$};
\draw[dashed](6,0.5)--(6,-0.5);
\draw[dashed](7,0.5)--(7,-0.5);
\draw[dashed](8,0.5)--(8,-0.5);
\draw(6,-0.5)--(7,-0.5);
\draw(6,0.5)--(7,0.5);
\draw (5,0.6) node[]{$G_{3,1}$};

\filldraw[black] (5,-2) circle (2pt)node[anchor=east] {$x_1$};
\filldraw[black] (6,-1.5) circle (2pt)node[anchor=south] {$x_2$};
\filldraw[black] (6,-2.5) circle (2pt)node[anchor=north] {$x_3$};
\filldraw[black] (7,-1.5) circle (2pt)node[anchor=south] {$x_4$};
\filldraw[black] (7,-2.5) circle (2pt)node[anchor=north] {$x_5$};
\filldraw[black] (8,-1.5) circle (2pt)node[anchor=south] {$x_6$};
\filldraw[black] (8,-2.5) circle (2pt)node[anchor=north] {$x_7$};
\draw[dashed](6,-1.5)--(6,-2.5);
\draw[dashed](7,-1.5)--(7,-2.5);
\draw[dashed](8,-1.5)--(8,-2.5);
\draw[dashed](6,-2.5)--(7,-2.5);
\draw (5,-1.4) node[]{$G_{3,2}$};
\end{tikzpicture}
 \caption{For $k=1,2$ an MH-maximal family consists of a single weighted graph: $G_1$ and $G_2$ where $M$ is $\{(x_2,x_3)\}$ and $\{(x_2,x_3),(x_4,x_5)\}$ respectively. For $k=3$ the weighted graphs $G_{3,1},G_{3,2}$ form an MH-maximal family where the matching $M$ is $\{(x_2,x_3),(x_4,x_5),(x_6,x_7)\}$.}
\label{supportZexists}
\end{center}
\end{figure}

\begin{remark}
%It can be proved that MH-maximal families provide the maximal possible number of Hamiltonian paths in the following sense.
Let $\mathcal{F}$ be an MH-maximal family on $2k+1$ vertices with the corresponding matching $M$. Observe that every Hamiltonian path that the Z-swapping construction produces from $\mathcal{F}$ is $M$-type. It can be proven that the maximal possible number of $M$-type Hamiltonian paths is also $\binom{k}{\left \lfloor \frac{k}{2} \right \rfloor}$, hence the name MH-maximal. Since we will not use this fact, we only sketch the argument. Hamiltonian paths are balanced bipartite graphs. Observe that two vertices that are connected by an edge of weight two in a weighted graph $G$ must be on the same side of the bipartition for a $G$-type Hamiltonian path. Thus the vertices that are connected by an edge of $M$ are "glued together" so we are essentially interested in balanced bipartitions on $|M|=k$ vertices.

%Thus one can think of MH-complete families as families that give us the optimal number of Hamiltonian paths with the additional requirement that the vertices connected by $M$ should be on the same side of the bipartition.
\end{remark}

The following lemma states that we can build H-maximal families using MH-maximal ones.

\begin{lem} \label{MH->H}
%If there is a $Z$-family on $2n+1$ vertices then there is a set of $\binom{2n+1}{n}$ compatible Hamiltonian paths on the same vertex set.
If there exist MH-maximal families on ground sets of size $3,5,\ldots,2n+1$ then there is a H-maximal family $\mathcal{F}$ on a ground set of size $2n+1$.
\end{lem}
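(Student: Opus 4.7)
My plan is to build the H-maximal family $\mathcal F$ directly on the labeled ground set $\{x_0,x_1,\ldots,x_{2n}\}$. Fix $x_0$ as the apex and fix the perfect matching $\widehat M=\{(x_1,x_2),(x_3,x_4),\ldots,(x_{2n-1},x_{2n})\}$ on the non-apex vertices. For each sub-matching $M'\subseteq\widehat M$ of size $k$, let $V'=\{x_0\}\cup V(M')$ and apply the given MH-maximal family on $V'$ (with $M'$ playing the role of the distinguished weight-$2$ matching of size $k$). Extend each weighted graph $G'$ in that family to a weighted graph $G$ on the full ground set by attaching, for every pair $(x_{2i-1},x_{2i})\in\widehat M\setminus M'$, a $1$-ladder (i.e., a single weight-$1$ edge on that pair). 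The resulting $G$ is properly laddered, inheriting the apex and residual from $G'$ and gaining $n-k$ disjoint $1$-ladders. The family $\mathcal F$ is the union of all such $G$'s over all $M'\subseteq\widehat M$.

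Compatibility is immediate: two graphs built from the same $M'$ inherit compatibility from the MH-maximal family on $V'$, and two graphs built from distinct $M_1'\ne M_2'$ disagree on the weight of any $e\in M_1'\triangle M_2'$ (weight $2$ in the graph using the $M_i'$ containing $e$, weight $1$ in the other, coming from the attached $1$-ladder). For the count, if $G'$ has $l(G')$ ladders then $G$ has $l(G')+n-k$ ladders, so Lemma~\ref{swapping} produces $2^{n-k}\cdot 2^{l(G')}$ Hamiltonian paths from $G$; summing over $G'$ and over the $\binom{n}{k}$ sub-matchings of each size gives
$$\sum_{k=0}^{n}\binom{n}{k}\binom{k}{\lfloor k/2\rfloor}2^{n-k}.$$
I would verify that this equals $\binom{2n+1}{n}$ combinatorially by grouping balanced bipartitions $(A,B)$ of the ground set according to the size $k$ of $M(A,B):=\{e\in\widehat M:\text{both endpoints of }e\text{ lie on the same side}\}$; the three factors record respectively the choice of $M(A,B)$, the balanced distribution of its $k$ same-side pairs between the sides (exactly the MH-maximal count on $V'$), and the orientations of the $n-k$ remaining split pairs.

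The main subtlety, and the step requiring the most care, is showing that each balanced bipartition $(A,B)$ is realized by exactly one Hamiltonian path in $\mathcal F$. Only the block using $M'=M(A,B)$ is a candidate: a strictly smaller $M'$ would force some $e\in M(A,B)\setminus M'$ to be split by its attached $1$-ladder, while an $M'$ containing some $e\notin M(A,B)$ would force $e$ to be same-side by its weight-$2$ matching edge. Within that block, the restriction of $(A,B)$ to $V'$ is balanced (the split pairs contribute $n-k$ vertices to each side, so the restricted side-difference equals the full side-difference $\pm 1$) and respects $M(A,B)$, so by the MH-maximal hypothesis it is realized by a unique Hamiltonian path on $V'$; the Z-swap of each of the $n-k$ attached $1$-ladders then has a unique setting producing the correct orientation on its pair. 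Here I use that compatible Hamiltonian paths have distinct bipartitions (two paths sharing a bipartition form a bipartite union, hence contain no triangle), which identifies the MH-maximal paths on $V'$ bijectively with the balanced respecting bipartitions of $V'$. Combining everything yields exactly $\binom{2n+1}{n}$ distinct Hamiltonian paths from $\mathcal F$, so $\mathcal F$ is H-maximal.
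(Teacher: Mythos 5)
Your construction is exactly the paper's: fix an apex and a perfect matching on the remaining $2n$ vertices, place an MH-maximal family on each sub-matching $M'$ (with the complementary matching edges attached as weight-$1$ ladders), and take the union over all $M'$, with the same compatibility argument and the same count $\sum_{k=0}^{n}\binom{n}{k}\binom{k}{\lfloor k/2\rfloor}2^{n-k}$. The only divergence is that you verify the identity $\sum_{k}\binom{n}{k}\binom{k}{\lfloor k/2\rfloor}2^{n-k}=\binom{2n+1}{n}$ by directly enumerating balanced bipartitions according to which matching pairs are same-side (a correct and arguably more illuminating argument than the paper's $4$-partition bijection), and you add an explicit distinctness check that is not needed, since pairwise compatible Hamiltonian paths are automatically distinct.
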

\begin{proof}
Let the ground set be $\{x_1,x_2,\ldots, x_{2n+1}\}$ and $B$ be the matching that consists of the edges $(x_2,x_3),(x_4,x_5), \ldots, (x_{2n},x_{2n+1})$. For each submatching $M \subseteq B$, we define a family $\mathcal{F}_M$ of properly laddered weighted graphs as follows. We put an MH-maximal family on the vertices of $M$ and the vertex $x_1$ using $M$ as the corresponding matching in the MH-maximal family and $x_1$ as the apex. Then we extend every graph of this MH-maximal family by adding the edges of $B \setminus M$ with weight $1$. Note that a single edge of weight $1$ is a special ladder (a $1$-ladder, in particular), thus $\mathcal{F}_M$ consists of properly laddered weighted graphs. Now let
$$\mathcal{F}=\bigcup_{M \subseteq B} \mathcal{F}_M.$$
We will prove that $\mathcal{F}$ is a H-maximal family.

The graphs in the family $\mathcal{F}$ are compatible by the following argument. Two graphs that correspond to different submatchings $M_1,M_2$ are compatible, since they contain every edge in the symmetric difference $M_1 \triangle M_2$ with different weights. Two graphs that correspond to the same matching are compatible, since they are built from an MH-maximal family which consists of compatible weighted graphs.

We show that we get the desired number of Hamiltonian paths by applying the Z-swapping construction to $\mathcal{F}$. First we count the number of Hamiltonian paths that we can construct from each $\mathcal{F}_M$. Let $|M|=k$, the graphs in $\mathcal{F}_M$ are constructed from an MH-maximal family on a ground set of size $2k+1$ by adding exactly $n-k$ edges (as $1$-ladders) to every graph. By definition we get $\binom{k}{\left \lfloor \frac{k}{2} \right \rfloor }$ Hamiltonian paths when applying the Z-swapping construction to an MH-maximal family. Since each graph in $\mathcal{F}_M$ has $n-k$ additional ladders, by Lemma \ref{swapping}, we get exactly $\binom{k}{\left\lfloor \frac{k}{2} \right\rfloor} 2^{n-k}$ Hamiltonian paths by applying the Z-swapping construction to $\mathcal{F}_M$. Thus applying the Z-swapping construction to $\mathcal{F}$, we can get exactly

$$ \sum_{k=0}^n \binom{n}{k}\binom{k}{\left\lfloor \frac{k}{2} \right\rfloor}2^{n-k} $$

Hamiltonian paths. We are left with the task of proving the following combinatorial identity:
 \[ \sum_{k=0}^n \binom{n}{k}\binom{k}{\left\lfloor \frac{k}{2} \right\rfloor}2^{n-k}=\binom{2n+1}{n}. \]

\if
The coefficient of $x^n$ in the expression $(1+x)^{2n+1}$ is clearly the right hand side. We will expand $(1+x)^{2n+1}$ in such a way that the coefficient of $x^n$ will become the left hand side.

\begin{gather*}
(1+x)^{2n+1}=(1+x)(1+2x+x^2)^{n} =(1+x)((1+x^2)+2x)^{n} = \\
 (1+x)\sum_{k=0}^{n}\binom{n}{k}(1+x^2)^{k}(2x)^{n-k} =
 (1+x)\sum_{k=0}^{n}\binom{n}{n-k}(1+x^2)^{k}(2x)^{n-k}= \\
 (1+x)\sum_{k=0}^{n}\sum_{j=0}^{k}\binom{n}{n-k}\binom{k}{j}x^{2j}(2x)^{n-k}=
 (1+x)\sum_{k=0}^{n}\sum_{j=0}^{k}\binom{n}{n-k}\binom{k}{j}2^{n-k}x^{2j+n-k}=\\
 \sum_{k=0}^{n}\sum_{j=0}^{k}\binom{n}{n-k}\binom{k}{j}2^{n-k}x^{2j+n-k}+\sum_{k=0}^{n}\sum_{j=0}^{k}\binom{n}{n-k}\binom{k}{j}2^{n-k}x^{2j+n-k+1}
\end{gather*}

Here we get the coefficient of $x^n$ in the first sum if $k$ is even and $j=k/2$, and in the second sum when $k$ is odd and $j=(k-1)/2$
\begin{gather*}
\sum_{\substack{k=0 \\ k \text{ even}}}^{n}\binom{n}{n-k}\binom{k}{\frac{k}{2}}2^{n-k}+\sum_{\substack{k=0 \\ k \text{ odd}}}^{n}\binom{n}{n-k}\binom{k}{\frac{k-1}{2}}2^{n-k}= \sum_{k=0}^{n}\binom{n}{n-k}\binom{k}{\left\lfloor \frac{k}{2} \right\rfloor }2^{n-k}
\end{gather*}
completing the proof.
\fi

The following short argument is due to G\'eza T\'oth.

Observe that the left hand side is equal to the number of $4$-partitions ${\it P}=(A_0,A_1,B,C)$ of the set $[n]=\{1,\dots,n\}$, where we require $0\le |A_1|-|A_0|\le 1$.
%(The running parameter $k$ belongs to $|A_0\cup A_1|=|A_0|+|A_1|$.)
For each such ${\it P}$ attach the set $D({\it P}):=A_0\cup B\cup \{-i: 1\le i\le n, i\in (A_0\cup C)\}$. Let $\overline{D}({\it P})=D({\it P})$ if $|A_1|=|A_0|$ and $\overline{D}({\it P})=D({\it P})\cup \{0\}$ if $|A_1|=|A_0|+1$. Then $\overline{D}({\it P})$ is an $n$-element subset of the $(2n+1)$-element set $\{-n,\dots,-1,0,1,\dots,n\}$ and every such $n$-element subset belongs to exactly one $4$-partition ${\it P}$ of the above type. This implies that the number of these $4$-partitions is exactly $\binom{2n+1}{n}$ proving the identity and thus completing the proof of the lemma.
\end{proof}

\begin{remark}
Kitti Varga gave a different proof of the combinatorial identity using polynomials. We only sketch her proof. Start with
\[ (1+x)^{2n+1}=(1+x)(1+2x+x^2)^{n} =(1+x)((1+x^2)+2x)^{n}. \]
Expand the right hand side using the binomial theorem twice. Then comparing the coefficient of $x^{n}$ of the left hand side and the expanded right hand side yields the desired identity.
\end{remark}

\medskip

Now we see that to prove Theorem~\ref{main} it is enough to build MH-maximal families. Thus the next lemma provides what we still need to finish the proof of Theorem~\ref{main}.

\begin{lem}\label{finish}
For every positive integer $k$ there is an MH-maximal family on $2k+1$ vertices.
\end{lem}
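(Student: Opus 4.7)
We proceed by strong induction on $k$, with the cases $k \le 3$ handled by the families in Figure~\ref{supportZexists}. For the inductive step, we describe an explicit construction of an MH-maximal family $\mathcal{F}_k$ for each $k$.

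The construction is driven by the bipartition structure of a $G$-type Hamiltonian path: since every $M$-edge has weight $2$ in every graph of $\mathcal{F}_k$, its two endpoints lie on the same side of the path's bipartition, so the bipartition is encoded by a balanced subset $S \subseteq [k]$ of size $\lfloor k/2 \rfloor$. The goal is to realise each such $S$ as the bipartition of exactly one Hamiltonian path produced by Lemma~\ref{swapping}. Fix a pairing of the $M$-edges into pairs $P_i = \{e_{2i-1}, e_{2i}\}$, with a leftover edge $e^*$ if $k$ is odd. Each balanced $S$ has a \emph{profile} on this pairing: $j$ pairs are fully contained in $S$, some number are disjoint from $S$, and the remaining pairs are split; for $k$ odd, one also records whether $e^* \in S$.

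To each profile class we associate exactly one properly laddered weighted graph. For $k$ even (where by balance the number of fully-in pairs equals that of fully-out pairs, say $j$), the graph consists of a $2$-ladder on each split pair together with, when $j \ge 1$, a single $4j$-ladder combining the fully-in and fully-out pairs as $V$- and $W$-sides respectively (case~a of Lemma~\ref{swapping}). For $k$ odd with $e^* \notin S$ we use case~b of Lemma~\ref{swapping} with $e^*$ as the residual edge, and the analogous ladder structure on the remaining $k - 1$ $M$-edges. For $k$ odd with $e^* \in S$ (so that $j + 1$ pairs are fully-out whenever $j$ are fully-in), we use case~c of Lemma~\ref{swapping} with $m = 4j + 2$: the $(2j + 1)$ apex-side $M$-edges (including $e^*$) form the $x$-path, the $2(j + 1)$ fully-disjoint $M$-edges form the $y$-path, and each of the remaining split pairs becomes a $2$-ladder. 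One then verifies that Lemma~\ref{swapping} applied to $\mathcal{F}_k$ yields exactly $\binom{k}{\lfloor k/2 \rfloor}$ paths, one per balanced bipartition: for $k = 2n$ this reduces to the combinatorial identity
\[
\sum_{j \ge 0} \binom{n}{2j}\binom{2j}{j}\, 2^{n - 2j} = \binom{2n}{n},
\]
derivable by expanding $(1 + 2x + x^2)^n = ((1 + x^2) + 2x)^n$ and reading the coefficient of $x^n$; for $k = 2n + 1$ the count splits according to whether $e^* \in S$ and sums to $\binom{2n+1}{n}$ via Pascal's identity. Distinct graphs in $\mathcal{F}_k$ produce disjoint sets of bipartitions, because each balanced subset uniquely determines its profile class and hence the graph it comes from.

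The main obstacle is the pairwise compatibility of the graphs in $\mathcal{F}_k$: for every two of them we must exhibit a shared edge with different weights. The strategy is to exploit the orientation freedom within each ladder (the labelling $v_j$ versus $w_j$ and the assignment of each $M$-edge's two endpoints to these labels), so that for every pair $G, G' \in \mathcal{F}_k$ some rung of $G$ (of weight $1$) coincides with a non-$M$ weight-$2$ side edge of $G'$, either of a larger ladder or of a case-c residual path. The structural regularity of the construction ensures that for any two graphs one can find a pair of $M$-edges whose endpoints play the role of a rung in one graph and of a non-$M$ side edge in the other; the remaining task is to fix a global orientation scheme --- e.g., orienting each $M$-edge $e_i = (a_i, b_i)$ in a canonical way relative to the ladder in which it sits --- under which such coincidences hold for all pairs of graphs in $\mathcal{F}_k$ simultaneously, which can be carried out by a case analysis over the profile classes.
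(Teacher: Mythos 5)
Your architecture is genuinely different from the paper's. The paper does not construct the family directly from profile classes of balanced subsets; it runs a mutual induction with Lemma~\ref{MH->H}: it first obtains an H-maximal family on $k$ vertices (or $k-1$ when $k$ is even), and then converts it into an MH-maximal family on $2k+1$ vertices by a doubling transformation $x_i\mapsto\{x_i,x_i'\}$ that turns each $h$-vertex ladder into a $2h$-vertex ladder and doubles the residual paths. The payoff of that route is that pairwise compatibility is inherited for free: if $(x_i,x_j)$ has weight $1$ in $G_1$ and weight $2$ in $G_2$, then the transform of $G_1$ contains both $(x_i,x_j)$ and $(x_i',x_j')$ with weight $1$, while the transform of $G_2$ contains one of them with weight $2$.

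In your proposal, that step --- pairwise compatibility of the graphs in $\mathcal{F}_k$ --- is precisely the part you do not carry out; you name it as ``the main obstacle'' and then defer it to an unspecified global orientation scheme and a case analysis you do not perform. This is a genuine gap, not a routine verification. Concretely, take $k=8$ with pairs $P_1,\dots,P_4$, $P_i=\{e_{2i-1},e_{2i}\}$, $e_{2i-1}=(a_i,b_i)$, $e_{2i}=(c_i,d_i)$, and compare the two $j=2$ graphs $G$ (fully-in $\{P_1,P_2\}$, fully-out $\{P_3,P_4\}$) and $G'$ (fully-in $\{P_1,P_3\}$, fully-out $\{P_2,P_4\}$). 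Both are $8$-ladders with no split pairs, every within-pair connector has weight $2$ in both, and under the natural orientation (each side traversing $a_i b_i c_i d_i$ with the $r$-th vertex of one side aligned to the $r$-th of the other) no weight-$2$ side edge of either graph is a weight-$1$ rung of the other, so they are not compatible. One can repair this particular pair by reversing one side of $G'$, but such repairs interact across all pairs of graphs in the family, and you give no argument that a single consistent choice works simultaneously. (There is also a smaller bookkeeping issue: if the fully-in/fully-out assignment is taken as ordered data, the $4j$-ladder already yields a factor of $2$ by swapping its sides, so the classes must be unordered for the count $\sum_{j}\binom{n}{2j}\binom{2j}{j}2^{n-2j}=\binom{2n}{n}$ to come out right; this is fixable.) As it stands, the existence of a compatible family realizing your profile decomposition is asserted rather than proved, so the lemma is not established by this argument.
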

\begin{proof}
We will build MH-maximal families using H-maximal families on smaller ground sets. We proceed with induction on $k$. 
We have already seen the existence of MH-maximal families for $k=1,2,3$ (see Figure~\ref{supportZexists}), so the base case is proven. Now suppose that there exist MH-maximal families on $1,3,\ldots, 2k-1$ vertices, and we build one on $2k+1$ vertices. We will have two separate induction steps, one when $k$ is odd and a little different one when $k$ is even.

\medskip \textbf{If $k$ is odd:} Since $k$ is odd and less than $2k-1$, by the induction hypothesis there are MH-maximal families for each odd number up to $k$. Thus by Lemma~\ref{MH->H} there is a H-maximal family on a ground set of size $k$. By definition we can construct $\binom{k}{\left\lfloor \frac{k}{2} \right\rfloor}$ Hamiltonian paths from this H-maximal family. Observe that this is exactly the number of Hamiltonian paths that is required in the definition of the MH-maximal family on $2k+1$ vertices! To obtain this same number, we will transform our H-maximal family into an MH-maximal family by preserving the number of graphs and the number of ladders for each graph. By Lemma~\ref{swapping} this ensures that we can construct the exact same number of Hamiltonian paths from the family after the transformation.

 We will transform each weighted graph $G$ on the vertices $\{x_1, \ldots, x_k\}$ to a weighted graph $G'$ on the vertices $\{w, x_1,x_2,\ldots,x_k,x'_1,x'_2,\ldots,x'_k\}$ in such a way, that the matching $M=\{(x_1,x'_1),(x_2,x'_2), \ldots , (x_k, x'_k)\}$ is a subgraph of $G'$, every edge of $M$ gets weight $2$ and the vertex $w$ will serve as the apex of $G'$.

We transform each component of $G$ separately: A weighted ladder on $h$ vertices is transformed into a weighted ladder of $2h$ vertices as follows. If $(x_i,x_j)$ is an edge of weight $1$ in $G$ then in $G'$, both $(x_i,x_j)$ and $(x'_i,x'_j)$ are edges of weight $1$. If $(x_i,x_j)$ was an edge of weight $2$ in $G$, then in $G'$ we alternately choose $(x_i,x_j)$ or $(x'_i,x'_j)$ to be an edge of weight $2$ in $G'$ starting from the top of the ladder and doing differently "above" and "below" each weight $1$ edge, see Figure~\ref{ugly part}.
%we connect either $(x_i,x_j)$ or $(x'_i,x'_j)$ by an edge of weight $2$ alternately starting from the top of the ladder, see Figure~\ref{ugly part}.

\begin{figure}[htbp,scale=0.5]
\begin{center}
\begin{tikzpicture}
\tikzstyle{vertex}=[draw,circle,fill=black,minimum size=4,inner sep=0]

\node[vertex] (x2) at (0,0) [label=left:$x_2$] {};
\node[vertex] (x3) at (1,0) [label=right:$x_3$] {};
\draw (x2) -- (x3);

\begin{scope}[shift={(5,0)}]
\node[vertex] (x2) at (0,0) [label=below:$x_2$] {};
\node[vertex] (x3) at (1,0) [label=below:$x_3$] {};
\node[vertex] (x2') at (-1,0) [label=left:$x_2'$] {};
\node[vertex] (x3') at (2,0) [label=right:$x_3'$] {};
\draw (x2) -- (x3);
\draw[dashed] (x2) -- (x2');
\draw[dashed] (x3) -- (x3');
\draw (x2') to [bend left] (x3');
\end{scope}
\end{tikzpicture}
\end{center}

\begin{center}
\begin{tikzpicture}
\tikzstyle{vertex}=[draw,circle,fill=black,minimum size=4,inner sep=0]

\node[vertex] (x2) at (0,1) [label=left:$x_2$] {};
\node[vertex] (x3) at (1,1) [label=right:$x_3$] {};
\node[vertex] (x4) at (0,0) [label=left:$x_4$] {};
\node[vertex] (x5) at (1,0) [label=right:$x_5$] {};
\draw (x2) -- (x3);
\draw (x4) -- (x5);
\draw[dashed] (x2) -- (x4);
\draw[dashed] (x3) -- (x5);

\begin{scope}[shift={(5,0)}]
\node[vertex] (x2) at (0,1) [label=below:$x_2$] {};
\node[vertex] (x3) at (1,1) [label=below:$x_3$] {};
\node[vertex] (x4) at (0,0) [label=below:$x_4$] {};
\node[vertex] (x5) at (1,0) [label=below:$x_5$] {};
\node[vertex] (x2') at (-1,1) [label=left:$x_2'$] {};
\node[vertex] (x3') at (2,1) [label=right:$x_3'$] {};
\node[vertex] (x4') at (-1,0) [label=left:$x_4'$] {};
\node[vertex] (x5') at (2,0) [label=right:$x_5'$] {};

\draw (x2) -- (x3);
\draw (x4) -- (x5);
\draw[dashed] (x2) -- (x2');
\draw[dashed] (x3) -- (x3');
\draw[dashed] (x4) -- (x4');
\draw[dashed] (x5) -- (x5');
\draw[dashed] (x2') -- (x4');
\draw[dashed] (x3') -- (x5');
\draw (x2') to [bend left] (x3');
\draw (x4') to [bend left] (x5');
\end{scope}
\end{tikzpicture}
\end{center}

\begin{center}
\begin{tikzpicture}
\tikzstyle{vertex}=[draw,circle,fill=black,minimum size=4,inner sep=0]

\node[vertex] (x2) at (0,1) [label=left:$x_2$] {};
\node[vertex] (x3) at (1,1) [label=right:$x_3$] {};
\node[vertex] (x4) at (0,0) [label=left:$x_4$] {};
\node[vertex] (x5) at (1,0) [label=right:$x_5$] {};
\node[vertex] (x6) at (0,-1) [label=left:$x_6$] {};
\node[vertex] (x7) at (1,-1) [label=right:$x_7$] {};
\draw (x2) -- (x3);
\draw (x4) -- (x5);
\draw (x6) -- (x7);
\draw[dashed] (x2) -- (x6);
\draw[dashed] (x3) -- (x7);

\begin{scope}[shift={(5,0)}]
\node[vertex] (x2) at (0,1) [label=below:$x_2$] {};
\node[vertex] (x3) at (1,1) [label=below:$x_3$] {};
\node[vertex] (x4) at (0,0) [label=below left:$x_4$] {};
\node[vertex] (x5) at (1,0) [label=below right:$x_5$] {};
\node[vertex] (x2') at (-1,1) [label=left:$x_2'$] {};
\node[vertex] (x3') at (2,1) [label=right:$x_3'$] {};
\node[vertex] (x4') at (-1,0) [label=left:$x_4'$] {};
\node[vertex] (x5') at (2,0) [label=right:$x_5'$] {};

\node[vertex] (x6) at (0,-1) [label=below:$x_6$] {};
\node[vertex] (x7) at (1,-1) [label=below:$x_7$] {};
\node[vertex] (x6') at (-1,-1) [label=left:$x_6'$] {};
\node[vertex] (x7') at (2,-1) [label=right:$x_7'$] {};

\draw (x2) -- (x3);
\draw (x4) -- (x5);
\draw[dashed] (x2) -- (x2');
\draw[dashed] (x3) -- (x3');
\draw[dashed] (x4) -- (x4');
\draw[dashed] (x5) -- (x5');
\draw[dashed] (x2') -- (x4');
\draw[dashed] (x3') -- (x5');
\draw (x2') to [bend left] (x3');
\draw (x4') to [bend left] (x5');

\draw (x6) -- (x7);
\draw[dashed] (x6) -- (x6');
\draw[dashed] (x7) -- (x7');
\draw (x6') to [bend left] (x7');
\draw[dashed] (x4) -- (x6);
\draw[dashed] (x5) -- (x7);
\end{scope}
\end{tikzpicture}
\end{center}

\begin{center}
\begin{tikzpicture}
\tikzstyle{vertex}=[draw,circle,fill=black,minimum size=4,inner sep=0]

\node[vertex] (x2) at (0,1) [label=left:$x_2$] {};
\node[vertex] (x3) at (1,1) [label=right:$x_3$] {};
\node[vertex] (x4) at (0,0) [label=left:$x_4$] {};
\node[vertex] (x5) at (1,0) [label=right:$x_5$] {};
\node[vertex] (x6) at (0,-1) [label=left:$x_6$] {};
\node[vertex] (x7) at (1,-1) [label=right:$x_7$] {};
\node[vertex] (x8) at (0,-2) [label=left:$x_8$] {};
\node[vertex] (x9) at (1,-2) [label=right:$x_9$] {};
\draw (x2) -- (x3);
\draw (x4) -- (x5);
\draw (x6) -- (x7);
\draw (x8) -- (x9);
\draw[dashed] (x2) -- (x8);
\draw[dashed] (x3) -- (x9);

\begin{scope}[shift={(5,0)}]
\node[vertex] (x2) at (0,1) [label=below:$x_2$] {};
\node[vertex] (x3) at (1,1) [label=below:$x_3$] {};
\node[vertex] (x4) at (0,0) [label=below left:$x_4$] {};
\node[vertex] (x5) at (1,0) [label=below right:$x_5$] {};
\node[vertex] (x2') at (-1,1) [label=left:$x_2'$] {};
\node[vertex] (x3') at (2,1) [label=right:$x_3'$] {};
\node[vertex] (x4') at (-1,0) [label=left:$x_4'$] {};
\node[vertex] (x5') at (2,0) [label=right:$x_5'$] {};

\node[vertex] (x6) at (0,-1) [label=below:$x_6$] {};
\node[vertex] (x7) at (1,-1) [label=below:$x_7$] {};
\node[vertex] (x8) at (0,-2) [label=below:$x_8$] {};
\node[vertex] (x9) at (1,-2) [label=below:$x_9$] {};
\node[vertex] (x6') at (-1,-1) [label=left:$x_6'$] {};
\node[vertex] (x7') at (2,-1) [label=right:$x_7'$] {};
\node[vertex] (x8') at (-1,-2) [label=left:$x_8'$] {};
\node[vertex] (x9') at (2,-2) [label=right:$x_9'$] {};

\draw (x2) -- (x3);
\draw (x4) -- (x5);
\draw[dashed] (x2) -- (x2');
\draw[dashed] (x3) -- (x3');
\draw[dashed] (x4) -- (x4');
\draw[dashed] (x5) -- (x5');
\draw[dashed] (x2') -- (x4');
\draw[dashed] (x3') -- (x5');
\draw (x2') to [bend left] (x3');
\draw (x4') to [bend left] (x5');

\draw (x6) -- (x7);
\draw (x8) -- (x9);
\draw[dashed] (x6) -- (x6');
\draw[dashed] (x7) -- (x7');
\draw[dashed] (x8) -- (x8');
\draw[dashed] (x9) -- (x9');
\draw[dashed] (x6') -- (x8');
\draw[dashed] (x7') -- (x9');
\draw (x6') to [bend left] (x7');
\draw (x8') to [bend left] (x9');

\draw[dashed] (x4) -- (x6);
\draw[dashed] (x5) -- (x7);
\end{scope}
\end{tikzpicture}
\end{center}
\caption{The transformation of the ladders.}
\label{ugly part}
\end{figure}
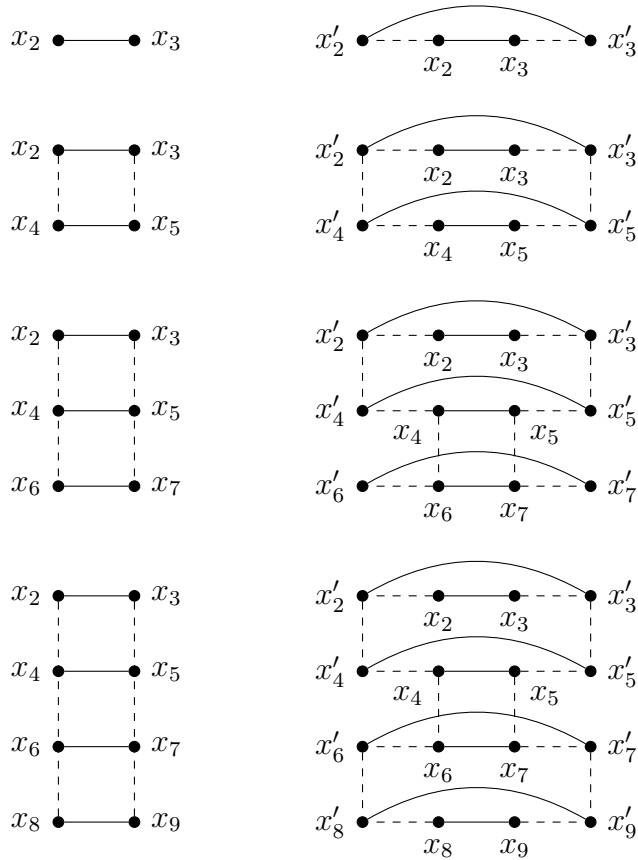

A path that consists of edges of weight $2$ in (the residual part of) $G$ is transformed as follows: If $(x_i,x_j)$ was an edge of weight $2$ in $G$, then in $G'$ either $(x_i,x_j)$ or $(x'_i,x'_j)$ is an edge of weight $2$ so that with the edges of $M$ the resulting component of $G'$ is also a path, see Figure~\ref{paths}.

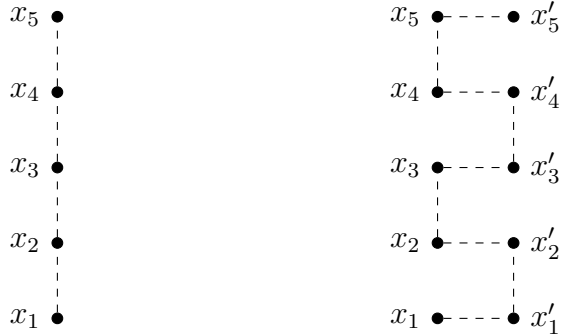
\begin{figure}[htbp,scale=0.5]
\begin{center}
\begin{tikzpicture}
\tikzstyle{vertex}=[draw,circle,fill=black,minimum size=4,inner sep=0]

\node[vertex] (x1) at (0,0) [label=left:$x_1$] {};
\node[vertex] (x2) at (0,1) [label=left:$x_2$] {};
\node[vertex] (x3) at (0,2) [label=left:$x_3$] {};
\node[vertex] (x4) at (0,3) [label=left:$x_4$] {};
\node[vertex] (x5) at (0,4) [label=left:$x_5$] {};
\draw[dashed] (x1) -- (x2);
\draw[dashed] (x2) -- (x3);
\draw[dashed] (x3) -- (x4);
\draw[dashed] (x4) -- (x5);

\begin{scope}[shift={(5,0)}]

\node[vertex] (x1) at (0,0) [label=left:$x_1$] {};
\node[vertex] (x2) at (0,1) [label=left:$x_2$] {};
\node[vertex] (x3) at (0,2) [label=left:$x_3$] {};
\node[vertex] (x4) at (0,3) [label=left:$x_4$] {};
\node[vertex] (x5) at (0,4) [label=left:$x_5$] {};
\node[vertex] (x'1) at (1,0) [label=right:$x'_1$] {};
\node[vertex] (x'2) at (1,1) [label=right:$x'_2$] {};
\node[vertex] (x'3) at (1,2) [label=right:$x'_3$] {};
\node[vertex] (x'4) at (1,3) [label=right:$x'_4$] {};
\node[vertex] (x'5) at (1,4) [label=right:$x'_5$] {};
\draw[dashed] (x1) -- (x'1);
\draw[dashed] (x2) -- (x'2);
\draw[dashed] (x3) -- (x'3);
\draw[dashed] (x4) -- (x'4);
\draw[dashed] (x5) -- (x'5);
\draw[dashed] (x'1) -- (x'2);
\draw[dashed] (x2) -- (x3);
\draw[dashed] (x'3) -- (x'4);
\draw[dashed] (x4) -- (x5);

\end{scope}
\end{tikzpicture}
\end{center}
\caption{The transformation of the paths.}
\label{paths}
\end{figure}

Note that this transformation doubles the number of vertices of the paths in the residual part, thus the difference in the number of their vertices is now four. To repair this we transform the apex of $G$ to a path containing two vertices connected by an edge of weight two and we attach this to the end of the shorter path restoring the length difference of the residual part to two. (Remember that we still have the apex $w$.) Thus this transformation produces properly laddered weighted graphs.

The transformed weighted graphs are compatible by the following argument. Let $G_1$ and $G_2$ be arbitrary weighted graphs from the H-maximal family and $G'_1,G'_2$ their transformed counterparts from the MH-maximal family. Let $(x_i,x_j)$ be (one of) the edge(s) that is contained in both $G_1$ and $G_2$ but with different weight. Without loss of generality we can assume that it gets weight $1$ in $G_1$. We transformed $G_1$ in such a way that $G'_1$ contains both edges $(x_i,x_j)$ and  $(x'_i,x'_j)$ with weight $1$. However, since $G_2$ contains the edge $(x_i,x_j)$ with weight $2$, we transformed it in such a way that in $G'_2$ either the edge $(x_i,x_j)$ or the edge $(x'_i,x'_j)$ gets weight $2$. This takes care of the induction step when $k$ is odd.

\medskip  \textbf{If  $k$ is even:} If $k$ is even, then $k-1$ is odd, and there is an H-maximal family on $k-1$ vertices from which we can construct $\binom{k-1}{\left \lfloor \frac{k-1}{2} \right \rfloor}$ Hamiltonian paths. Unlike in the odd case, observe that this is just half the number of Hamiltonian paths we would like to construct, since if $k$ is even then $\binom{k-1}{\left \lfloor \frac{k-1}{2} \right \rfloor}=\frac{1}{2}\binom{k}{\left\lfloor \frac{k}{2} \right\rfloor}$. We start with the exact same transformation of weighted graphs on the vertices $\{x_1, \ldots, x_{k-1}\}$ forming a H-maximal family to weighted graphs on the vertices $\{w,x_1,\ldots, x_{k-1},x'_1,\ldots, x'_{k-1}\}$ forming an MH-maximal family. We have two more vertices: $x_{k}$ and $x'_{k}$ that we did not use yet. Since $k-1$ is odd, the MH-maximal family on $2k-1$ vertices consists of weighted graphs that have a non-empty residual part as every ladder contains an even number of edges from the prescribed matching of the MH-maximal family.  Thus if we add the vertices $x_k,x'_{k}$ connected by an edge of weight $2$ to the shorter path in the residual part, we can complete the residual part of every weighted graph into a ladder, see Figure~\ref{even case}. %This way we doubled the number of Hamiltonian paths that we can get from each weighted graph by the swapping construction, thus we can construct $2\binom{k-1}{\frac{k-2}{2}}=\binom{k}{\frac{k}{2}}$ Hamiltonian paths from our transformed family and the proof is complete.

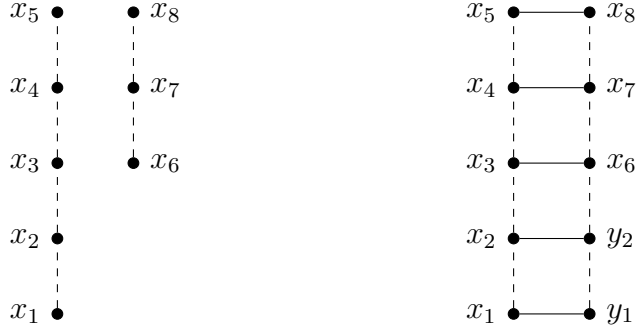
\begin{figure}[htbp,scale=0.5]
\begin{center}
\begin{tikzpicture}
\tikzstyle{vertex}=[draw,circle,fill=black,minimum size=4,inner sep=0]

\node[vertex] (x1) at (0,0) [label=left:$x_1$] {};
\node[vertex] (x2) at (0,1) [label=left:$x_2$] {};
\node[vertex] (x3) at (0,2) [label=left:$x_3$] {};
\node[vertex] (x4) at (0,3) [label=left:$x_4$] {};
\node[vertex] (x5) at (0,4) [label=left:$x_5$] {};
\node[vertex] (x6) at (1,2) [label=right:$x_6$] {};
\node[vertex] (x7) at (1,3) [label=right:$x_7$] {};
\node[vertex] (x8) at (1,4) [label=right:$x_8$] {};
\draw[dashed] (x1) -- (x2);
\draw[dashed] (x2) -- (x3);
\draw[dashed] (x3) -- (x4);
\draw[dashed] (x4) -- (x5);
\draw[dashed] (x6) -- (x7);
\draw[dashed] (x7) -- (x8);

\begin{scope}[shift={(6,0)}]

\node[vertex] (x1) at (0,0) [label=left:$x_1$] {};
\node[vertex] (x2) at (0,1) [label=left:$x_2$] {};
\node[vertex] (x3) at (0,2) [label=left:$x_3$] {};
\node[vertex] (x4) at (0,3) [label=left:$x_4$] {};
\node[vertex] (x5) at (0,4) [label=left:$x_5$] {};
\node[vertex] (x6) at (1,2) [label=right:$x_6$] {};
\node[vertex] (x7) at (1,3) [label=right:$x_7$] {};
\node[vertex] (x8) at (1,4) [label=right:$x_8$] {};
\node[vertex] (y1) at (1,0) [label=right:$y_1$] {};
\node[vertex] (y2) at (1,1) [label=right:$y_2$] {};
\draw[dashed] (x1) -- (x2);
\draw[dashed] (x2) -- (x3);
\draw[dashed] (x3) -- (x4);
\draw[dashed] (x4) -- (x5);
\draw[dashed] (x6) -- (x7);
\draw[dashed] (x7) -- (x8);
\draw[dashed] (y1) -- (y2);
\draw[dashed] (y2) -- (x6);
\draw (x5)--(x8);
\draw(x4)--(x7);
\draw(x3)--(x6);
\draw(x2)--(y2);
\draw(x1)--(y1);

\end{scope}

\end{tikzpicture}
\end{center}
\caption{With the two new vertices $y_1,y_2$, we can complete the residual part to a ladder. }
\label{even case}
\end{figure}
 Thus for each weighted graph we increased the number of ladders by one. This by Lemma~\ref{swapping} doubles the number of Hamiltonian paths that the Z-swapping construction gives. Since for even $k$ we have $2\binom{k-1}{\left \lfloor \frac{k-1}{2} \right \rfloor}=\binom{k}{\left \lfloor \frac{k}{2} \right \rfloor}$ we are done. This finished the case when $k$ is even and the proof is complete.
\end{proof}

\medskip
\par\noindent
{\em Proof of Theorem\ref{main}.}
We have already seen that it is enough to prove the statement when the number of vertices is odd. Then the statement of the theorem is equivalent to say that $H$-maximal families exist on $2n+1$ vertices for every $n$. Lemma~\ref{MH->H} implies that this is true once we know the existence of MH-maximal families for all odd-element vertex sets of size at most $2n+1$. Lemma~\ref{finish} gives that this condition is always satisfied and thus the proof of Theorem~\ref{main} is completed.
\hfill$\Box$

\begin{remark}
Consider the compatibility graph $\mathcal{G}_n$ of the Hamiltonian paths: $V(\mathcal{G}_n)$ is the set of Hamiltonian paths on $n$ vertices and two such vertices are adjacent if the corresponding Hamiltonian paths are compatible. Theorem~\ref{main} determines the clique number $\omega(\mathcal{G}_n)$ of this graph. Observe that the maximal clique is far from unique since in Lemma~\ref{swapping} we can use any ordering of the ladders. Thus we can construct $2^l$ compatible Hamiltonian paths in $l!$ ways there. By using Lemma~\ref{swapping} in the proof of Theorem~\ref{main}, we can construct many cliques of maximal size in $\mathcal{G}_n$. One can actually show that the number of maximal cliques in $\mathcal{G}_n$ is at least doubly exponential.
\end{remark}

\section{Hamiltonian-cycle-different paths} \label{Hcdiff}

Now that we know the maximal number of triangle-different Hamiltonian paths, it is a natural question to ask what happens for other cycles. Observe that since odd cycles are not bipartite, the same upper bound holds for $C_{2k+1}$-different Hamiltonian paths. For small ground sets Table~\ref{final} contains the largest families that we could construct using a computer.

%A computer search for small $n$ revealed what can be seen in Figure~\ref{computer}.

%
%\begin{table}[htbp]
%\begin{center}
%  \begin{tabular}{| c || c | c | c | c | c | c | c |}
%    \hline
%    n  & 3 & 4 &5&6&7&8&9 \\ \hline \hline
%    3-cycle & 3 & 3&10&10&35&35&126 \\ \hline
%    5-cycle &  & &10&10&35&35&126 \\ \hline
%    7-cycle &&&& &21&35&126                   \\ \hline
%    9-cycle  &&&&&&&36\\ \hline
%  \end{tabular}
%\end{center}
%\caption{The maximal size of odd-cycle different Hamiltonian paths that we could construct by a computer.}
%\label{computer}
%\end{table}

Observe that in the case of odd cycles except at the $C_7$-different paths on $7$ vertices and the $C_9$-different paths on $9$ vertices, every value is best possible as they attain the upper bound (the number of balanced bipartitions). By the following Claim the two exceptional values are also best possible. We say that two Hamiltonian paths are Hamiltonian-cycle-different if their union contains a Hamiltonian cycle on their common vertex set. 

\begin{claim} \label{hamcycleupper}
The maximum number of pairwise Hamiltonian-cycle-different Hamiltonian paths on $n$ vertices is at most $\binom{n}{2}.$
\end{claim}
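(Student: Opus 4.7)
The plan is to associate each Hamiltonian path $P\in\mathcal{F}$ with two ordered pairs (one at each endpoint) and to show that the resulting $2|\mathcal{F}|$ ordered pairs are pairwise distinct. Since the codomain is the set of ordered pairs of distinct vertices of $V$, which has size $n(n-1)$, the bound $|\mathcal{F}|\le\binom{n}{2}$ will follow immediately.

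Concretely, for each $P\in\mathcal{F}$ with endpoints $u$ and $v$, let $u'$ and $v'$ denote the unique neighbors of $u$ and $v$ in $P$ (so that $\{u,u'\}$ and $\{v,v'\}$ are the first and last edges of $P$). I associate with $P$ the two ordered pairs $(u,u')$ and $(v,v')$. Because $u\ne v$, these two pairs already differ in their first coordinate, so the two tags attached to a single path are automatically distinct.

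The heart of the argument is a one-line degree observation. Suppose some ordered pair $(a,b)$ occurs as a tag of both $P_1$ and $P_2$, with $P_1\ne P_2$ in $\mathcal{F}$. Then $a$ is an endpoint of each $P_i$, and its unique neighbor in each $P_i$ is $b$. Consequently, in the simple graph $P_1\cup P_2$ the vertex $a$ has $\{b\}$ as its entire neighborhood, so $\deg_{P_1\cup P_2}(a)=1$. A vertex of degree one cannot lie on any cycle, so $P_1\cup P_2$ contains no Hamiltonian cycle, contradicting the assumption that $\mathcal{F}$ is Hamiltonian-cycle-different.

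Combining these two facts, the $2|\mathcal{F}|$ tags are pairwise distinct elements of $\{(a,b)\in V\times V:a\ne b\}$, hence $2|\mathcal{F}|\le n(n-1)$, i.e.\ $|\mathcal{F}|\le\binom{n}{2}$. I do not foresee any real obstacle: the whole proof reduces to a single-vertex degree count. The only mildly creative ingredient is the decision to record, at each endpoint, also its in-path neighbor---the cruder map $P\mapsto\{u,v\}$ sending a path to its unordered endpoint pair is not injective on Hamiltonian-cycle-different families (as one can already see on four vertices) and so cannot give the bound on its own.
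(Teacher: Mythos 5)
Your proof is correct and is essentially identical to the paper's own argument: the "tags" $(u,u')$ and $(v,v')$ are exactly the paper's "two edges on the ends directed towards the center of the path," and the key step in both is that a repeated tag forces a degree-one vertex in the union, which cannot lie on a Hamiltonian cycle. The count $2|\mathcal{F}|\le 2\binom{n}{2}$ then finishes the proof in the same way.
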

\begin{proof}
We use the trivial fact that a Hamiltonian graph does not contain a vertex of degree one. Let $\mathcal{H}$ be a family of Hamiltonian-cycle-different Hamiltonian paths. For every path in $\mathcal{H}$ we associate the two edges on their ends directed towards the center of the path. Observe that for distinct Hamiltonian paths in $\mathcal{H}$ we associated different edges, as their union does not contain a vertex of degree one. But there are exactly $2\binom{n}{2}$ directed edges in the complete graph on $n$ vertices and we associated two such edges to every path, thus $2|\mathcal{H}| \leq 2 \binom{n}{2}$ finishing the proof.
\end{proof}

\begin{remark}
Note that for a fixed $c$, one can bound the number of $C_{(n-c)}$-different Hamiltonian paths on a ground set of size $n$ in a similar fashion by a polynomial of degree $c+2$.
\end{remark}

We present a construction that attains this bound when the size of the ground set is a prime number.

\begin{claim}
If $p>2$ is a prime then there are $\binom{p}{2}$ pairwise Hamiltonian-cycle-different Hamiltonian paths on $p$ vertices.
\end{claim}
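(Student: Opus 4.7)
The plan is to exhibit the $\binom{p}{2}$ Hamiltonian paths explicitly as arithmetic progressions on $\mathbb{Z}_p$. For each $d \in \{1, 2, \ldots, (p-1)/2\}$ and each $a \in \mathbb{Z}_p$, let $P_{a,d}$ be the path $(a, a+d, a+2d, \ldots, a+(p-1)d) \pmod{p}$; since $\gcd(d,p)=1$ this is a Hamiltonian path. As an edge set $P_{a,d}$ equals the Hamiltonian cycle $C_d := \{\{x, x+d\} : x \in \mathbb{Z}_p\}$ with the single edge $\{a-d, a\}$ removed, so distinct pairs $(a,d)$ produce distinct paths and we obtain exactly $p\cdot(p-1)/2=\binom{p}{2}$ of them.

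For two of these paths sharing the same step $d$ but with distinct starts, the removed edges of $C_d$ differ, so the union recovers all of $C_d$, a Hamiltonian cycle. For step classes $d_1\neq d_2$, I would apply the affine action $x\mapsto \alpha x + \beta$ on $\mathbb{Z}_p$ to reduce to the normal form $d_1 = 1$, $d_2 = e$ with $e \notin \{0, \pm 1\}$, and missing $C_1$-edge $e_1 = \{-1, 0\}$; the missing $C_e$-edge then has the form $e_2 = \{b-e, b\}$ for some $b \in \mathbb{Z}_p$, and it suffices to find a Hamiltonian cycle inside the $4$-regular graph $(C_1 \cup C_e) \setminus \{e_1, e_2\}$.

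The key construction is a family of ``two-bridge'' Hamiltonian cycles in $C_1 \cup C_e$ parametrised by $a \in \mathbb{Z}_p$:
$$a+1,\ a+2,\ \ldots,\ a+e,\ a,\ a-1,\ a-2,\ \ldots,\ a+e+1,\ a+1.$$
This cycle uses every $C_1$-edge except $\{a, a+1\}$ and $\{a+e, a+e+1\}$, plus the two $C_e$-edges $\{a, a+e\}$ and $\{a+1, a+e+1\}$. Choosing $a = -1$ makes $\{a, a+1\} = e_1$, and the two $C_e$-edges used are $\{-1, e-1\}$ and $\{0, e\}$, which equal $e_2$ only when $b \in \{e-1, e\}$. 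Choosing $a = -1 - e$ makes $\{a+e, a+e+1\} = e_1$, and the $C_e$-edges used are $\{-1-e, -1\}$ and $\{-e, 0\}$, equalling $e_2$ only when $b \in \{-1, 0\}$. Since $e \notin \{0, \pm 1\} \pmod{p}$, the obstruction sets $\{e-1, e\}$ and $\{-1, 0\}$ are disjoint, so for every $b$ at least one of the two candidate cycles lies entirely inside $(C_1 \cup C_e)\setminus\{e_1, e_2\}$.

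The main conceptual obstacle is spotting the right family of Hamiltonian cycles: any single two-bridge cycle eliminates only a short list of possible $e_2$'s, and the crux is that exactly two well-chosen choices of $a$ suffice to cover all $p$ possibilities for $b$, precisely because of the disjointness above. The reduction to normal form via the affine automorphism group of $\mathbb{Z}_p$ is what makes the case analysis tractable, and primality of $p$ enters both to make the arithmetic-progression paths Hamiltonian and to supply a sharply transitive affine group on the vertex set.
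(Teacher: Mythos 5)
Your proposal is correct and takes essentially the same approach as the paper: the same family of single-difference (arithmetic-progression) Hamiltonian paths, the same trivial argument for two paths with equal step, and for distinct steps the same idea of normalizing by an affine relabelling of $\mathbb{Z}_p$ so that one path becomes $1,2,\ldots,p$ and then closing up a Hamiltonian cycle with two ``bridge'' edges of the other difference class, at least one of two candidate bridge pairs surviving because only one edge of that class is missing. Your version merely makes the paper's terse case analysis fully explicit.
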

\begin{proof}
Let us draw the ground set on the plane as a regular polygon. Let $\mathcal{H}$ consist of the Hamiltonian paths that do not contain two edges of different length. It is easy to see that $|\mathcal{H}|=\frac{p(p-1)}{2}.$ If both $H_1, H_2 \in \mathcal{H}$  contain edges of length say $l$, then their union consists of every edge of length $l$, and this graph is a Hamiltonian cycle since the size of the ground set is prime. The Hamiltonian path $\{1,2,\ldots, p\}$ is compatible with every other path from $\mathcal{H}$ that uses edges longer than one by the following argument. Let $H_2 \in \mathcal{H}$ be a path that uses edges of length $l$. Since there is a single edge of length $l$ that is missing from $H_2$, either both the edges $((p-l+1),1)$ and $(p-l,p)$ are in $H_2$ or both the edges $(p,l-1)$ and $(1,l)$ are in $H_2$. In both cases we have a Hamiltonian cycle in the union, the latter case can be seen in Figure \ref{hamcycle}.

\begin{figure}[htbp]
\begin{center}
  \begin{tikzpicture}
\tikzstyle{vertex}=[draw,circle,fill=black,minimum size=4,inner sep=0]

\node[vertex] (x1) at (1,0) [label=below:$1$] {};
\node[vertex] (x2) at (1.5,0.1) [label=below:$2$] {};
\node (x3) at (2,0.25)  {};
\node[vertex] (xp) at (0,0) [label=below:$p$] {};
\node[vertex] (xp-1) at (-0.8,0.1) [label=below:$(p-1)$] {};
\node (xp-2) at (-1.3,0.25)  {};

\node (xl-3) at (2.9,1) {};
\node[vertex] (xl-2) at (3,1.5) [label=right:$l-2$] {};
\node[vertex] (xl-1) at (3.1,2) [label=right:$l-1$] {};
\node[vertex] (xl) at (3.15,2.5) [label=right:$l$] {};
\node[vertex] (xl+1) at (3.1,3) [label=right:$l+1$] {};
\node (xl+2) at (2.9,3.5)  {};
\draw[line width=2pt] (x1) -- (x2);
\draw[line width=2pt] (x2) -- (x3);
\draw[line width=2pt] (xl-3) -- (xl-2);
\draw[line width=2pt] (xl-2) -- (xl-1);
\draw (xl-1) -- (xl);
\draw[line width=2pt] (xl) -- (xl+1);
\draw[line width=2pt] (xl+1) -- (xl+2);
\draw[line width=2pt] (xp-1) -- (xp);
\draw[line width=2pt] (xp-1) -- (xp-2);

\draw[line width=2pt] (xp) -- (xl-1);
\draw[line width=2pt] (x1) -- (xl);

    \end{tikzpicture}
\end{center}
\caption{The thick edges form a Hamiltonian cycle. We either have this or the symmetric situation when the two edges of length $l$ are pointing towards the left side.}
\label{hamcycle}
\end{figure}
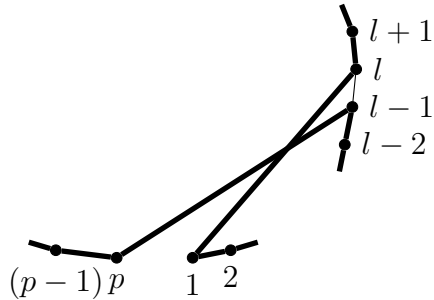

Two arbitrary Hamiltonian paths from $\mathcal{H}$ form a Hamiltonian cycle since we can straighten out one of them by relabelling the vertices so that it becomes the path $\{1,2,\ldots, p\}$, while the other one will be transformed too, but it still connects vertices of the same difference modulo the size of the ground set thus the same reasoning applies.
\end{proof}

\section{Families of graphs with a triangle in every union} \label{other graphs}

In the light of Theorem \ref{intersect}, one might find the following question natural. What is the maximal size of a family of $n$-vertex graphs with the property that every union contains a triangle? This is far easier than the intersection version. Let $H,G$ be graphs. We say that $H$ is a maximal $G$-free graph if it is $G$-free and adding any edge to $H$, the resulting graph has a (not necessarily induced) subgraph isomorphic to $G$. 

\begin{claim} \label{easy}
Let $G$ be a fixed graph. The maximal size of a family of graphs such that every pairwise union contains a copy of $G$ is equal to the size of the family that consists of all the graphs that contain $G$ as a subgraph and all the maximal $G$-free graphs. 
\end{claim}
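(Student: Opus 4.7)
\medskip

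The plan is to verify two things: that the family $\mathcal{A}$ consisting of all $n$-vertex graphs containing $G$ together with all maximal $G$-free $n$-vertex graphs does satisfy the ``triangle in every pairwise union'' condition (with ``triangle'' replaced by $G$), and that no family satisfying the condition can be larger than $|\mathcal{A}|$. The two parts of $\mathcal{A}$ are disjoint, so $|\mathcal{A}|$ is the sum of the number of $G$-containing graphs and the number of maximal $G$-free graphs on $n$ vertices.

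For the lower bound, I would check pairwise unions case by case. If at least one of the two graphs contains $G$, the union does too. If both are maximal $G$-free and distinct, then neither can be a subgraph of the other (otherwise the smaller one would not be maximal), so the union strictly contains each, and adding any edge to a maximal $G$-free graph creates a copy of $G$ by definition; hence the union contains $G$.

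For the upper bound, let $\mathcal{F}$ be any family of $n$-vertex graphs such that every pair has a union containing $G$. Split $\mathcal{F}$ into the $G$-containing members $\mathcal{F}_1$ and the $G$-free members $\mathcal{F}_2$. Trivially $|\mathcal{F}_1|$ is at most the number of $n$-vertex graphs containing $G$. For $\mathcal{F}_2$ I will define a map $\varphi$ sending each $F \in \mathcal{F}_2$ to some maximal $G$-free graph $\hat{F}$ obtained by arbitrarily adding edges to $F$ until no more edges can be added without creating a copy of $G$. The key claim is that $\varphi$ is injective on $\mathcal{F}_2$: if $F_1, F_2 \in \mathcal{F}_2$ are distinct and $\varphi(F_1)=\varphi(F_2)=H$, then $F_1 \cup F_2 \subseteq H$, so $F_1 \cup F_2$ inherits $G$-freeness from $H$, contradicting the assumption on $\mathcal{F}$. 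Hence $|\mathcal{F}_2|$ is at most the number of maximal $G$-free graphs, and combining the two bounds gives $|\mathcal{F}| \le |\mathcal{A}|$.

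Neither step presents a genuine obstacle; the only subtlety is the observation that two distinct maximal $G$-free graphs cannot be nested, which guarantees that their union acquires a new edge and hence a copy of $G$, and the mirror observation that this nesting phenomenon also makes the map $\varphi$ well-defined as an injection. Everything else is bookkeeping.
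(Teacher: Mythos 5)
Your proof is correct and rests on the same key observation as the paper's: two distinct members of the family cannot both lie inside a single maximal $G$-free graph, since their union would then be $G$-free. The paper packages this as an iterative ``push up'' replacement of each non-maximal $G$-free member by a maximal $G$-free supergraph, while you package it as an injection from the $G$-free members into the maximal $G$-free graphs; this is only a cosmetic difference in bookkeeping, and your treatment of the lower bound (spelling out why two distinct maximal $G$-free graphs have a union containing $G$) is if anything slightly more explicit than the paper's.
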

\begin{proof}
The family that consist of all the graphs that contain $G$ as a subgraph and all the maximal $G$-free graphs clearly has the property that every pairwise union contains a copy of $G$. Now suppose that we have a family $\mathcal{H}$ of graphs with the property that every pairwise union contains a copy of $G$. Suppose that there is a graph $H \in \mathcal{H}$ that does not contain $G$ as a subgraph and that is not maximal $G$-free either. Then $H$ is a subgraph of a maximal $G$-free graph $H'$. Observe that $H' \notin \mathcal{H}$, since otherwise $H \cup H'=H'$ and there is no copy of $G$ in $H'$ a contradiction. Thus we can replace $H$ with $H'$ without losing the property that in each union there is a copy of $G$. Thus one by one we can replace (push up) every graph in $\mathcal{H}$ that does not contain a copy of $G$ and is not maximal $G$-free to be maximal $G$-free. This finishes the proof.  
\end{proof}

Proposition \ref{easy} is relevant to us when $G$ is a triangle.  Thus we see that when we do not add any restriction, the optimal families with the property that any union contains a triangle  consist of the graphs that contain a triangle (the trivial ones) and the maximal triangle-free graphs. A less general superclass of the class of Hamiltonian paths is the class of trees. But determining the maximal size of triangle-different families of trees is also simple.

\begin{claim} \label{recur}
The maximum number of pairwise triangle-different trees on $n$ vertices is $2^{n-1}-1.$
\end{claim}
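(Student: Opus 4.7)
The plan is to match both bounds against $2^{n-1}-1$, the number of unordered bipartitions of $[n]$ into two nonempty parts. For the upper bound I would use that each tree $T$ on $[n]$, being connected and bipartite, determines a \emph{unique} bipartition of $[n]$ into two nonempty parts. If two trees $T_1,T_2$ share the bipartition $(A,B)$, then every edge of $T_1\cup T_2$ runs between $A$ and $B$, so the union is bipartite and contains no odd cycle, in particular no triangle. Hence a pairwise triangle-different family injects into the set of bipartitions, giving at most $2^{n-1}-1$ trees.

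For the lower bound I would exhibit one tree per bipartition. Fix vertex $1$ and identify each bipartition with a nonempty $B\subseteq\{2,\ldots,n\}$, with $A=[n]\setminus B$ being the part containing $1$. Writing $b=\min B$, define $T_{A,B}$ to be the double star with edge set
\[ \{(1,x):x\in B\}\cup\{(b,y):y\in A\setminus\{1\}\}. \]
This is a tree on $[n]$ (it has $n-1$ edges and every vertex is adjacent to $1$ or to $b$) and its bipartition is $(A,B)$.

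The remaining task is to verify that $T_1:=T_{A_1,B_1}$ and $T_2:=T_{A_2,B_2}$ are triangle-different for every pair of distinct bipartitions. Writing $b_i=\min B_i$, I would split into two cases. If $b_1=b_2=:b$, then since the bipartitions differ, $B_1\triangle B_2\neq\emptyset$, and any $v$ in this symmetric difference automatically satisfies $v\neq 1,b$; assuming WLOG $v\in B_1\setminus B_2$, the edges $(1,b)$ and $(1,v)$ lie in $T_1$ while $(b,v)$ lies in $T_2$ (since $v\in A_2\setminus\{1\}$), yielding the triangle $\{1,b,v\}$. If instead $b_1<b_2$, then $b_1\in A_2\setminus\{1\}$, so $(1,b_1)\in T_1$, $(1,b_2)\in T_2$, and $(b_1,b_2)\in T_2$ form the triangle $\{1,b_1,b_2\}$. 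The main (minor) obstacle is simply keeping this case analysis clean; with the double-star construction in place, the triangle is visible by inspection in each case.
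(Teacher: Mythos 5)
Your proof is correct, and the lower-bound half takes a genuinely different route from the paper. The upper bound is identical: both arguments inject a triangle-different family into the $2^{n-1}-1$ bipartitions of $[n]$ into two nonempty parts, using that two trees sharing a bipartition have a bipartite union. For the lower bound, the paper builds the family recursively: from a family $\mathcal{F}_{n-1}$ of size $2^{n-2}-1$ it produces two extensions of each tree (attaching the new vertex $v_n$ to either end of a fixed edge $xy$, so that the two extensions form the triangle $x,y,v_n$ in their union) plus the star centered at $v_n$, giving $2|\mathcal{F}_{n-1}|+1$. You instead give a closed-form construction: one explicit double star $T_{A,B}$ per bipartition, with hub $1$ on the $A$-side and hub $b=\min B$ on the $B$-side, followed by a two-case verification ($b_1=b_2$ versus $b_1<b_2$) that exhibits a concrete triangle in each union. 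I checked both cases and they go through (in the first case $v\in B_1\triangle B_2$ is automatically distinct from $1$ and $b$, and in the second case $b_1\in A_2\setminus\{1\}$ indeed forces $(b_2,b_1)\in T_2$). Your approach buys a self-contained, non-inductive proof in which every tree is visibly a star or double star --- a property the paper only obtains as an afterthought in the remark following its proof, via a careful choice of $x$ and $y$ in the induction. The paper's recursion is slightly shorter to state but leaves the verification of triangle-difference across different "generations" somewhat implicit, whereas your case analysis is fully explicit. Neither argument is stronger than the other in terms of the result obtained; the only (trivial) caveat in both is the degenerate case $n=1$, which neither treatment addresses and which does not affect the claim for $n\ge 2$.
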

\begin{proof}
For the upper bound, observe that every tree is a bipartite graph. The number of (not necessarily balanced) bipartitions of $[n]$ is $2^{n-1}$. We clearly cannot have two trees with the same bipartition, as the union would be bipartite. Moreover no tree corresponds to the bipartition of the ground set where one side is the empty set. 

For the lower bound, we construct a large enough family of trees inductively. For $n=2$ the only tree on two vertices attains the upper bound. Suppose that we have a family $\mathcal{F}_{n-1}$ of $2^{n-2}-1$ triangle-different trees on the vertices $\{v_1, \ldots, v_{n-1}\}$. We build a new set of trees $F_{n}$ on the vertices $\{v_1, \ldots, v_{n}\}$ as follows.  For each tree $T \in \mathcal{F}_{n-1}$ let us fix two adjacent vertices $x=x(T)$ and $y=y(T)$ and we build two new trees $T_{x}$ and $T_{y}$ by connecting the new vertex $v_{n}$ as a leaf to $x$ and $y$, respectively, without changing anything else in the tree. We also add the star centered at $v_n$ to $\mathcal{F}_n$. Clearly every other tree is triangle-different from the star centered at $v_n$. Two trees in $\mathcal{F}_n$ that are constructed from different trees in $\mathcal{F}_{n-1}$ are triangle-different. And for a fixed $T \in \mathcal{F}_{n-1}$, the union of the trees $T_{x}$ and $T_{y}$ contains the triangle $x,y,v_n$. Thus $\mathcal{F}_n$ is a triangle-different family, with size $|\mathcal{F}_{n}|=2|\mathcal{F}_{n-1}|+1=2^{n-1}-1$.
\end{proof}

\begin{remark}
One can also ensure that the trees constructed in the proof of Claim \ref{recur} are either stars or double stars (two vertex disjoint stars and an edge connecting the center of the two stars) by the choice of the vertices $x$ and $y$.
\end{remark}

\section{Open problems and concluding remarks} \label{opr}

We conjecture that 
%something like 
a relaxed version of 
Theorem~\ref{main} should be true for any odd cycle.

\begin{conj}\label{eventually}
If  $k > 1$ and $n$ is large enough, the maximal number of $C_{2k+1}$-different Hamiltonian paths on $n$ vertices is equal to the number of balanced bipartitions of the ground set $n$.
\end{conj}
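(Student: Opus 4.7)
The upper bound coincides with that of Proposition~\ref{prop}, so only the construction needs attention. The natural strategy is to carry over the Z-swapping machinery of Section~\ref{proof}, replacing the weight set $\{1,2\}$ by $\{1,2k\}$. A weight-$1$ edge still forces an edge of the Hamiltonian path, while a weight-$2k$ edge now forces the two endpoints to be at distance exactly $2k$. Compatibility is preserved: if two weighted graphs share an edge with discordant weights, the weight-$1$ edge of one path together with the length-$2k$ subpath of the other closes into a $(2k+1)$-cycle, exactly as in the triangle case.

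The first technical step is to redesign the ladder. A direct count shows that between two consecutive rungs of a generalized ladder one must insert $2k-2$ additional vertices on each side, so that a weight-$2k$ ``side'' $(v_i,v_{i+1})$ is realized by a subpath $v_i w_i f_1\cdots f_{2k-2} v_{i+1}$ (and symmetrically for the swapped choice). The filler vertices and the edges joining them must be fixed with weight $1$ in $G$, so that the Hamiltonian path is forced through them in a canonical way regardless of the Z-swap choice, while the swap remains free at the rungs. The residual part and the apex must be redesigned analogously, so that the parity offsets introduced by the heavy weight are correctly absorbed. With the generalized ladder in place, the analogue of Lemma~\ref{swapping} should still yield $2^l$ compatible type-respecting Hamiltonian paths, where $l$ is the number of generalized ladders.

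The scaffolding of Lemmas~\ref{MH->H} and~\ref{finish} should then go through essentially by form: the combinatorial identity
\[
\sum_{j=0}^{n}\binom{n}{j}\binom{j}{\lfloor j/2\rfloor}\,2^{n-j}=\binom{2n+1}{n}
\]
does not depend on the value of the heavy weight, so the same recursion delivers H-maximal families of the target size. Inductively one would build MH-maximal families whose distinguished matching $M$ carries the heavy weight $2k$, and paste them together via the analogue of Lemma~\ref{MH->H} to form H-maximal families, thereby matching the upper bound.

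The main obstacle, and the source of the ``$n$ large enough'' hypothesis, is making the vertex budget balance. A generalized $l$-ladder now consumes $2l+(l-1)(2k-2)$ vertices, and the vertex-doubling trick used in the induction step of Lemma~\ref{finish} must be replaced by a substantially more elaborate expansion that simultaneously inserts the $2k-2$ fillers per rung gap while preserving both compatibility and the matching~$M$. Controlling these insertions so that the total vertex count lands at exactly $2n+1$, while every pairwise discordance of weights is maintained throughout the transformation, is where I expect the technical difficulty to lie, and presumably is what forces an ``$n\geq f(k)$'' threshold in the conjecture.
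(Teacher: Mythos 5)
There is nothing in the paper to compare your proposal against: the statement you are addressing is Conjecture~\ref{eventually}, which the paper explicitly leaves open. The authors state in Section~\ref{opr} that, generalizing their methods, they could only establish the much weaker Conjecture~\ref{2} (a $2^{n-o(n)}$ lower bound, not the exact value), and only when $2k$ is a power of two, deferring even that to a subsequent paper. Your text is likewise a programme rather than a proof --- it repeatedly defers to ``should still yield'' and ``where I expect the technical difficulty to lie'' --- so it cannot be accepted as a proof of the conjecture.

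Beyond that, the central step of your plan fails as described. In Lemma~\ref{swapping} the pairwise compatibility of the $2^l$ Z-swapped paths comes from the first ladder where two $0$--$1$ sequences differ: the last common vertex $x$ together with the top rung $\{v_1,w_1\}$ yields the edges $xv_1$, $xw_1$ and $v_1w_1$ in the union, i.e.\ a \emph{triangle}. In your generalized ladder with $2k-2$ fillers between consecutive rungs, the divergence at the first differing ladder still produces only this triangle, and a triangle is not a $C_{2k+1}$; you give no mechanism producing a $(2k+1)$-cycle in the union of two paths built from the \emph{same} weighted graph with different swap sequences. (The cross-compatibility between different weighted graphs via a discordant $\{1,2k\}$ edge is fine, since a weight-$1$ edge plus a length-$2k$ subpath between the same endpoints does close into a $C_{2k+1}$.) There are further unresolved points: the filler vertices cannot all be joined by forced weight-$1$ edges, because the edge from the rung into the first filler ($v_if_1$ versus $w_if_1$) changes with the swap; and the counting no longer reduces to the identity $\sum_j\binom{n}{j}\binom{j}{\lfloor j/2\rfloor}2^{n-j}=\binom{2n+1}{n}$, because a generalized ladder consumes $2k-2$ extra vertices per rung gap, so the number of swap bits available on $2n+1$ vertices drops and the product $\binom{k}{\lfloor k/2\rfloor}2^{n-k}$ is no longer the count produced by $\mathcal{F}_M$. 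These are exactly the obstructions that keep the statement a conjecture.
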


On the other hand the maximal number of $C_{2k}$-different Hamiltonian paths is more than exponential (One can construct such a system of size $\left( \frac{n}{k}\right)!$ using the simple method in Theorem~1 of \cite{k=4}). An other significant difference between the even and the odd case is that the maximal number of even-cycle-different Hamiltonian paths is asymptotically larger than the maximal number of $C_4$-different Hamiltonian paths, as expected, see \cite{k=4}. Thus one is tempted to think that we can observe the "normal behaviour" for even cycles. For the following weaker version of Conjecture~\ref{eventually} we have additional supporting evidence.

\begin{conj} \label{2}
If  $k > 1$ then the maximal number of $C_{2k+1}$-different Hamiltonian paths on $n$ vertices is at least $2^{n-o(n)}$.
\end{conj}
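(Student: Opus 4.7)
The plan is to push the machinery of Section~\ref{proof} from triangles to general $C_{2k+1}$. The first step is to observe that the Z-swapping construction of Lemma~\ref{swapping} automatically produces $C_{2k+1}$ in pairwise unions when the ladders are long enough: if a properly laddered weighted graph $G$ has every ladder of size at least $k$, then any two $G$-type Hamiltonian paths produced by Z-swapping that first differ at a ladder $i$ have a union containing a $C_{2k+1}$. The cycle is formed by the entering vertex $x$ together with the first $k$ rungs of ladder $i$, traversed in a zig-zag pattern that uses alternately the ``$H_1$-diagonal'' and ``$H_2$-diagonal'' edges before returning to $x$; a direct check shows each of its $2k+1$ edges lies in the union, generalizing the triangle case ($k=1$) already handled in Lemma~\ref{swapping}.

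Using only this observation one obtains $2^l$ pairwise $C_{2k+1}$-different Hamiltonian paths on about $2kl$ vertices, giving only a lower bound of $2^{n/(2k)}$. To lift the base to $2$, I would emulate the H-maximal construction of Lemma~\ref{MH->H} but with each $1$-ladder replaced by a $k$-ladder. The cross-family compatibility in the original construction relied on single edges having different weights ($1$ versus $2$), yielding triangles in the union; for $C_{2k+1}$-differentness across families the weight $2$ should be replaced by weight $2k$, meaning that pairs of designated vertices lie at distance exactly $2k$ in every corresponding path. A weight-$1$ edge in one family combined with a weight-$2k$ edge in another then produces a $C_{2k+1}$ in the union, since the $2k-1$ internal vertices of the length-$2k$ path are automatically disjoint from the endpoints of the single edge. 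One would then construct generalized MH-maximal families by induction analogous to Lemma~\ref{finish}, adapting the vertex-doubling transformation so that the new distance-$2k$ constraints are preserved throughout.

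The main obstacle is the counting step. In the triangle case, the identity
\[
\sum_{j=0}^{n}\binom{n}{j}\binom{j}{\lfloor j/2\rfloor}2^{n-j}=\binom{2n+1}{n}
\]
of Lemma~\ref{MH->H} is essential for boosting the rate from $2^{n/2}$ (a single H-maximal instance) up to $2^{n-o(n)}$. An analog must be derived that accommodates the fact that each basic unit now consumes $2k$ vertices rather than $2$; Kitti Varga's polynomial-expansion proof in the remark following Lemma~\ref{MH->H} is a natural template, since a suitable generating-function manipulation can flexibly encode the modified structure. Since Conjecture~\ref{2} asks only for $2^{n-o(n)}$ rather than the sharp upper bound $\binom{n}{\lfloor n/2\rfloor}$ of Proposition~\ref{prop}, the target leaves some room for sub-exponential losses; nevertheless, controlling the $k$-fold blowup of every unit so that the leading $2^n$ survives is the crux of the argument and the step most likely to fail.
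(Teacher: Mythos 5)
This statement is Conjecture~\ref{2}: the paper does not prove it, and in fact explicitly defers even the special case where $2k$ is a power of two to a subsequent paper. Your proposal is therefore not being measured against an existing proof, and on its own terms it is a plan rather than a proof: you yourself flag the counting step as ``the step most likely to fail,'' and that is exactly where it does fail as sketched. Your first observation is sound -- if every ladder has at least $k$ rungs, the two zig-zag traversals of the first ladder where two $0$--$1$ sequences differ do yield a $C_{2k+1}$ in the union (e.g.\ $x,v_1,w_2,v_3,w_3,v_2,w_1,x$ for $k=3$), and a weight-$1$ edge in one path together with a distance-$2k$ constraint on the same pair in another path does close up into a $C_{2k+1}$. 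But these two mechanisms give you at most a factor of $2$ per ladder and cross-compatibility per ``unit,'' and the units have grown from $2$ vertices to $2k$ vertices.

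That growth kills the rate. In Lemma~\ref{MH->H} the total count is (number of submatchings of $B$) $\times$ (size of the MH-maximal piece) $\times$ $2^{\#\text{$1$-ladders}}$, and each factor is roughly $2^{N/2}$ on $N$ vertices because there are $N/2$ two-vertex units; the product $2^{N/2}\cdot 2^{N/4}\cdot 2^{N/4}$-type bookkeeping is exactly what the identity $\sum_j\binom{n}{j}\binom{j}{\lfloor j/2\rfloor}2^{n-j}=\binom{2n+1}{n}$ certifies sums to $2^{N-o(N)}$. If each unit instead consumes $2k$ vertices, there are only $N/(2k)$ units, every analogous factor is bounded by $2^{N/(2k)+o(N)}$, and no choice of the generating-function identity can push the product past roughly $2^{cN/k}$ for an absolute constant $c$. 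So the ``$k$-fold blowup'' is not a technicality to be absorbed into the $o(n)$ term; it caps the construction at $2^{O(n/k)}$, and reaching $2^{n-o(n)}$ requires units that contribute superconstantly many mutually $C_{2k+1}$-compatible options each, i.e.\ a genuinely new idea beyond doubling-per-ladder. This is precisely why the statement remains a conjecture in the paper.
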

Generalizing the methods of present paper, we managed to show that Conjecture~\ref{2} is true when $2k$ is a power of two. But even in these cases our constructions are only asymptotically equal to the upper bound of balanced bipartitions. We consider this as the basis of a subsequent paper.

\medskip

We also conjecture that the bound in Claim~\ref{hamcycleupper} can also be attained for composite integers.

\begin{conj}\label{hamconj}
The maximum number of pairwise Hamiltonian-cycle-different Hamiltonian paths is $\binom{n}{2}$ for every $n$.
\end{conj}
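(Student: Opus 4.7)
The plan is to generalize the cyclic construction of the prime case to all $n$ using a Hamiltonian decomposition of $K_n$. For odd $n=2k+1$, Walecki's theorem decomposes $K_n$ into $k$ edge-disjoint Hamiltonian cycles $C_1,\ldots,C_k$. Taking from each $C_i$ the $n$ Hamiltonian paths obtained by deleting one of its edges produces exactly $nk=\binom{n}{2}$ paths, matching the upper bound of Claim~\ref{hamcycleupper}. Two paths from the same cycle are automatically Hamiltonian-cycle-different because their union is $C_i$ itself. The crux of the problem is \emph{cross-cycle compatibility}: for any $i\neq j$ and any edges $e_i\in C_i$, $e_j\in C_j$, the $4$-regular graph $(C_i\cup C_j)\setminus\{e_i,e_j\}$ must contain a Hamiltonian cycle.

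Concretely, I would work in $\mathbb{Z}_n$ and use the ``length-$d$'' cycle $C_d = \{(i,i+d) : i\in\mathbb{Z}_n\}$ for each $d$ with $\gcd(d,n)=1$. For a ``bad'' difference $d$, where the length-$d$ edges form several short cycles rather than a single Hamiltonian cycle, I would construct a replacement Hamiltonian cycle that stitches these short cycles together using edges of a good difference. By the rotational symmetry of $\mathbb{Z}_n$, cross-cycle compatibility then reduces to finitely many configurations, parametrized by the pair of differences and the relative position of the two deleted edges; each such configuration should be certifiable by exhibiting an explicit Hamiltonian cycle in the $4$-regular union minus two edges. For even $n$, where $K_n$ is not Hamilton-decomposable, I would reduce to the odd case $n-1$: adjoin a new vertex $v$, extend every member of a $\binom{n-1}{2}$-sized compatible family on $n-1$ vertices through $v$ in a structured way, and add $n-1$ further paths passing through $v$ so that the total count reaches $\binom{n}{2}$.

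The main obstacle is cross-cycle compatibility for pairs involving the bespoke replacement cycles at bad differences: the algebraic symmetry that makes the prime case immediate no longer applies, and the union-minus-two-edges graph can have minimum degree only $2$ when the two deleted edges share a vertex, so standard Hamiltonicity criteria cannot be invoked blindly. The verification appears to require case analysis with explicit Hamiltonian cycles, facilitated by the cyclic symmetry of the underlying group. A safety net is that the space of Hamilton decompositions of $K_n$ is rich: if one decomposition fails, another with better cross-cycle compatibility should exist. Even for a small composite like $n=4$ one can check by hand that the four paths of a single Hamiltonian cycle together with two suitably chosen paths from another Hamiltonian cycle already yield $\binom{4}{2}=6$ compatible paths, giving some evidence that a careful general construction exists; turning this flexibility into a full proof will require a uniform combinatorial argument covering all bad-difference pairs.
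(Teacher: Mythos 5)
You should first note that the statement you were asked to prove appears in the paper only as Conjecture~\ref{hamconj}: the paper proves the upper bound $\binom{n}{2}$ (Claim~\ref{hamcycleupper}) and gives a matching construction only when $n$ is prime, leaving every composite $n$ open. Your plan for odd $n$ is essentially the paper's prime construction viewed through the lens of Hamilton decompositions: the length-$d$ edge classes form a rotational Hamilton decomposition of $K_n$, and deleting one edge from each cycle of the decomposition yields the $\binom{n}{2}$ paths, with same-class compatibility immediate and cross-class compatibility handled, in the prime case, by the rotational symmetry of $\mathbb{Z}_p$.

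The gap is exactly where you locate it, and it is not a technicality but the entire content of the conjecture beyond what is already known. For composite odd $n$ you must both construct replacement Hamiltonian cycles for the bad differences and prove that for every pair of cycles $C_i, C_j$ of the resulting decomposition and every choice of deleted edges $e_i\in C_i$, $e_j\in C_j$, the graph $(C_i\cup C_j)\setminus\{e_i,e_j\}$ is Hamiltonian. This is a strong property that fails for arbitrary Hamilton decompositions; the bespoke replacement cycles destroy the rotational symmetry that makes the prime case a short check, and no off-the-shelf Hamiltonicity criterion applies since the union minus two edges can have minimum degree $2$. Your ``safety net'' (that among the many decompositions some should work) is a heuristic, not an argument. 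The even case has a further unaddressed problem: if you extend compatible paths $P_1,P_2$ on $n-1$ vertices by attaching a new vertex $v$ to endpoints $a_1,a_2$, a Hamiltonian cycle in the extended union requires $a_1\neq a_2$ and a Hamiltonian $a_1$--$a_2$ path inside $P_1\cup P_2$, which is strictly stronger than the assumed existence of some Hamiltonian cycle in $P_1\cup P_2$; your sketch does not secure this. As written, the proposal reproduces the known prime case and outlines a plausible program for the rest, but it does not prove the conjecture.
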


The maximal number of $C_4$- or $K_4$-different Hamiltonian paths is also an open question at the time, even the correct order of magnitude is unknown. For other open problems about the maximal number of Hamiltonian paths with some restrictions on the pairwise unions see: \cites{connector,locsep,degree4,k=4,komesi,indep}.
Finally, in Table~\ref{final} we present the sizes of the largest families that we could construct for small ground sets and cycle lengths. This provides some experimental evidence for Conjecture~\ref{eventually} and Conjecture~\ref{hamconj}.

\begin{table}[htbp]
\begin{center}
\renewcommand*{\arraystretch}{1.2}
  \begin{tabular}{| c || c | c | c | c | c | c | c |}
    \hline
    n  & 3 & 4 &5&6&7&8&9 \\ \hline \hline
    3-cycle & 3 & 3&10&10&35&35&126 \\ \hline
    4-cycle && 6&12&32&97&248&594 \\ \hline
    5-cycle &  & &10&10&35&35&126 \\ \hline
    6-cycle &&&&15&49&128&315 \\ \hline
    7-cycle &&&& &21&35&126                   \\ \hline
    8-cycle &&&&&&28& 135 \\ \hline
    9-cycle  &&&&&&&36\\ \hline
  \end{tabular}
\end{center}
\caption{Lower bounds to the maximal size of cycle-different Hamiltonian paths.}
\label{final}
\end{table}

\section{Acknowledgement}

We would like to thank Kitti Varga and Géza Tóth for the beautiful proofs of the combinatorial identity in Lemma \ref{MH->H}. We would also like to thank Gábor Simonyi, Géza Tóth and János Körner for their help which greatly improved the quality of this manuscript. Finally we also thank many of our colleagues for encouragement and their interest in our progress.

\begin{bibdiv}
\begin{biblist}

\bib{indep}{article}{
  title={Independent sets in the union of two Hamilotnian cycles},
  author={R. Aharoni},
  author={D. Soltész},
  journal={In preparation},
  year={2016}
}

\bib{infinite}{article}{
  title={Permutation capacities of families of oriented infinite paths},
  author={G. Brightwell},
    author={G. Cohen},
    author={E. Fachini},
    author={M. Fairthorne},
    author={J. Körner},
    author={G. Simonyi},
    author={Á. Tóth},
  journal={SIAM Journal on Discrete Mathematics},
  volume={24},
  number={2},
  pages={441-456},
  year={2010}
}

\bib{connector}{article}{
  title={Connector families of graphs},
  author={G. Cohen},
    author={E. Fachini},
      author={J. Körner},
    journal={Graphs and Combinatorics},
  date={2015},
  volume ={30},
    number ={6},
    pages ={1417-1425}
}

\bib{k=4}{article}{
  title={Path separation by short cycles},
 author={G. Cohen},
    author={E. Fachini},
      author={J. Körner},
  journal={Journal of Graph Theory (2016), doi:10.1002/jgt.22050},
}

\bib{trintersect}{article}{
  title={Triangle-intersecting families of graphs},
  author={D. Ellis},
   author={Y. Filmus},
    author={E. Friedgut},
  journal={Journal of the European Mathematical Society},
  volume={14},
  number={3},
  pages={841--885},
  year={2012}
}

\bib{EFP}{article}{
  title={Intersecting families of permutations},
  author={D. Ellis},
  author={E. Friedgut},
  author={H. Pipel},
  journal={Journal of the American Mathematical Society},
  volume={24},
  number={3},
  pages={649--682},
  year={2011}
}

\bib{DF}{article}{
  title={On the maximum number of permutations with given maximal or
minimal distance},
  author={P. Frankl},
   author={M. Deza},
  journal={Journal of Combinatorial Theory Series A},
  volume={22},
  number={3},
  pages={352--360},
  year={1977}
}

\bib{colliding}{article}{
  title={Pairwise colliding permutations and the capacity of infinite graphs},
  author={J. Körner},
  author={C. Malvenuto},
  journal={SIAM Journal on Discrete Mathematics},
  volume={20},
  number={1},
  pages={203--212},
  year={2006},
  publisher={SIAM}
}

\bib{KMS}{article}{
  title={Graph-different permutations},
  author={J. Körner},
    author={C. Malvenuto},
      author={G. Simonyi},
      journal={SIAM Journal on Discrete Mathematics},
     volume={22},
  number={1},
  pages={489--499},
  year={2008},
  publisher={SIAM}
}

\bib{komesi}{article}{
  title={Families of graph-different Hamilton paths},
  author={J. Körner},
    author={S. Messuti},
      author={G. Simonyi},
      journal={SIAM Journal on Discrete Mathematics},
    volume={26},
    number={1},
    pages={321--329},
  date={2012}
}

\bib{locsep}{article}{
  title={Families of locally separated Hamilton paths},
  author={J. Körner},
    author={A. Monti},
    journal={arXiv:1411.3902 [math.CO]},
  date={2015}
}

\bib{degree4}{article}{
  title={Degree-doubling graph families},
  author={J. Körner},
    author={I. Muzi},
    journal={SIAM Journal on Discrete Mathematics},
    volume={27},
    number={3},
    pages={1575--1583},
  date={2013}
}

\bib{KSS}{article}{
  title={On the type of growth for graph-different permutations},
  author={J. K\"orner},
    author={G. Simonyi},
    author={B. Sinaimeri},
  journal={Journal of Combinatorial Theory Ser. A},
  volume={116},
  number={2},
  pages={713-723},
  year={2009}
}

\end{biblist}
\end{bibdiv}

\end{document}